\definecolor{darkgreen}{rgb}{0,0.5,0}
\numberwithin{equation}{section}
\newtheorem{thm}[equation]{\sc Theorem}
\newtheorem{lem}[equation]{\sc Lemma}
\newtheorem{cor}[equation]{\sc Corollary}
\newtheorem{prop}[equation]{\sc Proposition}
\newtheoremstyle{notation}{3pt}{3pt}{}{}{\itshape}{:}{.5em}{\thmname{#1}}
\theoremstyle{notation}
\newtheorem{rem}{\it Remark}
\newtheorem{defin}{\it Definition}
\newtheorem{ex}{\it Example}
\def\mod{\mbox{{\rm mod}}}
\def\End{\mbox{\rm End}}
\def\rad{\mbox{\rm rad}}
\def\soc{\mbox{\rm soc\,}}
\newcounter{boxsize}
\newcounter{tempcounter}
\newcommand{\smallentryformat}{\scriptstyle\sf}
\newcommand\smbox{\put(0,0){\line(1,0){\value{boxsize}}}%
  \put(\value{boxsize},0){\line(0,1){\value{boxsize}}}%
  \put(0,0){\line(0,1){\value{boxsize}}}%
  \put(0,\value{boxsize}){\line(1,0){\value{boxsize}}}}
\newcommand\numbox[1]{\put(0,0)\smbox%
  \put(0,0){\makebox(\value{boxsize},\value{boxsize})[c]{%
      $\smallentryformat#1$}}}
\newcommand\singlebox[1]{\raisebox{-.4ex}{\begin{picture}(4,0)\setcounter{boxsize}{3}%
    \put(0,0)\smbox%
    \put(0,0){\makebox(\value{boxsize},\value{boxsize})[c]{%
      $\scriptstyle\sf#1$}}\end{picture}}}
\newcommand\boxwithrow[2]{\raisebox{-.4ex}{\begin{picture}(5,0)\setcounter{boxsize}{3}%
    \put(0,0)\smbox%
    \put(0,0){\makebox(\value{boxsize},\value{boxsize})[c]{%
      $\scriptstyle\sf#1$}}
    \put(\value{boxsize},1.5){\line(1,0){1}}
    \put(\value{boxsize},1){$\;\scriptscriptstyle\sf#2$}
    \end{picture}}}
\def\sbullet{\makebox(0,0){$\scriptstyle\bullet$}}
\newcommand\boxes[2]{\ifthenelse{#2=3}{$\scriptstyle P_2^{#1}$}{%
                                       $\scriptstyle P_{#2}^{#1}$}}
\begin{document}
\thispagestyle{empty}
\color{black}
\phantom m\vspace{-2cm}

\bigskip\bigskip
\begin{center}
{\large\bf Finite direct sums of cyclic embeddings} 
\end{center}

\smallskip

\begin{center}
Justyna Kosakowska and Markus Schmidmeier

\bigskip
    {\it Dedicated to Professor Jos\'e Antonio de la Pe\~na}

\vspace{1cm}

\bigskip \parbox{10cm}{\footnotesize{\bf Abstract:}
  In this paper we generalize Kaplansky's combinatorial
  characterization of the isomorphism types of embeddings
  of a~cyclic subgroup in a finite abelian group given in his 1951 book
  {\it ``Infinite Abelian Groups''.} 
  For this we introduce partial maps on Littlewood-Richardson tableaux
  and show that they characterize the isomorphism types 
  of finite direct sums of such cyclic embeddings.}

\medskip \parbox{10cm}{\footnotesize{\bf MSC 2010:}
  05E10, 
  20K27, 
  47A15  

}

\medskip \parbox{10cm}{\footnotesize{\bf Key words:} 
subgroup embedding; invariant subspace; Littlewood-Richardson tableau; partial map }
\end{center}

\section{Introduction}

In \cite{birkhoff} Birkhoff stated the problem of classification of subgroups of finite abelian groups.
Connected with this problem are results of Kaplansky in \cite{kap} 
that present a~combinatorial
  characterization of the isomorphism types of embeddings of 
  a~cyclic subgroup in a~finite abelian group. This characterization uses sequences of positive integers, see Section \ref{sec-height-seq} for details.
  In this paper we use partial maps on Littlewood-Richardson tableaux (LR-tableaux for short)
  to generalize this result to finite direct sums of such embeddings. We extend results given in \cite[Section 2]{ks-box} and present some applications.\medskip

  Instead of finite abelian groups we work with finitely generated modules over 
  a~discrete valuation ring $\Lambda$. One of the presented result is Theorem \ref{theorem-combinatorial-classification} saying that a~direct sum of cyclic embeddings is uniquely determined by the associated LR-tableau with a~partial map satisfying the empty box property, see Section \ref{sec-tableaux} for definitions.
  LR-tableaux were successfully used in \cite{ks-hall,ks-survey,ks-box,ks-poles} to describe algebraic and geometric properties of short exact sequences of $\Lambda$-modules.
  Results obtained there show that relationships between short extact sequences of $\Lambda$-modules and LR-tableaux are deep 
  and interesting. 
  This motivates us to collect properties of LR-tableaux of direct sums of cyclic embeddings. \medskip

  The paper is organized as follows.

  \medskip
  In Section \ref{sec-poles} we present notation,
  definitions and basic facts concerning cyclic embeddings.
  In particular, we recall results of Kaplansky and
  rewrite them in the language of Littlewood-Richardson tableaux.

  \smallskip
  In Section \ref{sec-sums-cyclic} we formulate and prove
  the main result which is Theorem~\ref{theorem-combinatorial-classification}.

  \smallskip
  In Section \ref{sec-applications} we present applications
  of Theorem \ref{theorem-combinatorial-classification}.
  In particular, Corollary~\ref{cor-horizontal-strip}
  states that each LR-tableau of a certain shape
  can be realized as the LR-tableau of a direct sum of cyclic embeddings.
  This property will allow us in \cite{ks-box} to study
  the partition of the variety
  of short exact sequences of nilpotent linear operators
  into its irreducible components.

\section{Direct sums of cyclic embeddings}
\label{sec-poles}

 Let $\Lambda$ be a discrete valuation ring with maximal ideal generator $p$
and radical factor field $k$.
By $\mod\Lambda$ we denote the category of all (finite length)
$\Lambda$-modules. It is well known
that any
 finite length $\Lambda$-module $X$ is isomorphic to a~direct sum 
 $$X\cong \Lambda/(p^{\alpha_1})\oplus \Lambda/(p^{\alpha_2})\oplus \ldots\oplus \Lambda/(p^{\alpha_n}),$$
 where $\alpha_1\geq \alpha_2\geq\ldots\geq \alpha_n$. 
 The assignment 
 $X\mapsto \alpha=(\alpha_1,\ldots,\alpha_n)$ defines a bijection
 between the set of isomorphism classes of finite length $\Lambda$-modules 
 and the set of all partitions. 
 Given a partition $\alpha$, we denote by $N_\alpha(\Lambda)=N_\alpha$
 the corresponding $\Lambda$-module.\medskip
 
Denote by $\mathcal S(\Lambda)$ the category of all short exact sequences
in $\mod\Lambda$ with morphisms given by commutative diagrams.  
With the componentwise exact structure,
$\mathcal S(\Lambda)$ is an exact Krull-Remak-Schmidt category.
We denote objects in $\mathcal S(\Lambda)$ either as short exact sequences
$0\to A\to B\to C\to 0$ of $\Lambda$-modules, or as embeddings
$(A\subset B)$.

\subsection{Cyclic embeddings and poles}
\label{sec-height-seq}

\medskip
We review Kaplansky's classification of cyclic embeddings.

\medskip
An embedding $(A\subset B)$ is called {\it cyclic}
if the submodule $A$ is cyclic as a $\Lambda$-module,
that is, if $A_\Lambda$ is either indecomposable (i.e. of the form $\Lambda/(p^n)$ for some $n\geq 1$) or zero.
A cyclic embedding $(A\subset B)$ 
is a {\it pole} if $A$ is an indecomposable $\Lambda$-module
and if $(A\subset B)$ is an indecomposable object in $\mathcal S(\Lambda)$.
An embedding of the form $(0\subset B)$ is called {\it empty.} 
By $E_\beta$ we denote the empty embedding
  $(0\subset N_\beta)$.\medskip

For the classification we use height sequences which were   
originally introduced by Pr\"ufer
\cite{prue} as {\it H\"ohenexponenten.}
In \cite[Section~18]{kap}, 
the height sequence for an~element $a\in B$ is called 
its {\it Ulm-sequence}.

\begin{defin}\begin{itemize}
  \item A {\it height sequence} is a~sequence in $\mathbb N_0\cup\{\infty\}$
      which is strictly increasing until it reaches $\infty$
      after finitely many steps.
    A~height sequence is {\it non-empty}, if it has at least
    one element from $\mathbb{N}_0$.
  \item Let $B\in\mod\Lambda$. We say an element $a\in B$ has {\it height} $m$
    if $a\in p^m B\setminus p^{m+1}B$.
    In this case we write $h(a)=m$.
    By definition, $h(0)=\infty$.
  \item   The {\it height sequence for $a$ in $B$} is 
    $H_B(a)=(h(p^ia))_{i\in\mathbb N_0}$.
    Usually, we do not write the entries $\infty$.
  \item  A~sequence 
$(m_i)$ has a~{\it gap} after $m_\ell$ if $m_{\ell+1}>m_\ell+1$.  
  \end{itemize}
\end{defin}

The following result is proved in \cite[Theorem 25]{kap}.

\begin{prop}\label{prop-poles}
There is a~bijection
$$\{\text{poles}\}/_{\cong} \quad \stackrel{1-1}\longleftrightarrow \quad 
\{\text{finite non-empty strictly increasing sequences in $\mathbb N_0$}\}.$$
\end{prop}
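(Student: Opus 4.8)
The plan is to establish the bijection by constructing maps in both directions and checking they are mutually inverse. Given a pole $(A \subset B)$ with $A = \Lambda/(p^n)$ indecomposable, let $a \in B$ be a generator of $A$; I would associate to the pole the height sequence $H_B(a) = (h(p^i a))_{0 \le i < n}$, truncated before the first $\infty$ (which occurs at index $n$ since $p^n a = 0$). The first thing to verify is that this is a well-defined finite non-empty strictly increasing sequence in $\mathbb{N}_0$: non-emptiness and finiteness are clear from $1 \le n < \infty$, and strict increase $h(p^{i+1}a) > h(p^i a)$ is exactly the condition that distinguishes an indecomposable cyclic embedding (a pole) from a decomposable one — if the sequence failed to increase strictly at some step, one could split off a direct summand, so this is where the hypothesis that $(A \subset B)$ is \emph{indecomposable} in $\mathcal S(\Lambda)$ gets used.

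The converse direction takes a finite non-empty strictly increasing sequence $(m_0 < m_1 < \dots < m_{n-1})$ in $\mathbb{N}_0$ and builds a pole. The natural construction: realize $B$ as a pushout/explicit module generated by $a$ together with elements $b_0, b_1, \dots$ witnessing the prescribed heights, i.e. take $B = N_\beta$ for a suitable partition $\beta$ and an embedding of $\Lambda/(p^n)$ whose generator has height sequence $(m_i)$. Concretely one can take $B$ generated by $a$ and auxiliary generators $c_j$ with $p^{m_j} c_j = p^j a$ for $j$ at the "gaps" of the sequence, and $p^{m_{j}+1} c_j = 0$, arranging the module so that $B/A$ and $A$ have the right structure. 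I would then check that the resulting embedding is indeed indecomposable by computing its endomorphism ring, or by invoking the known classification that such modules have local endomorphism rings in $\mathcal S(\Lambda)$.

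The main work is proving the two composites are the identity. Starting from a sequence $(m_i)$, building the pole, and reading off the height sequence of the distinguished generator $a$: one must check that no "accidental" height increases occur, i.e. that $h_B(p^i a) = m_i$ exactly, not something larger — this requires verifying that the witnessing elements $c_j$ genuinely control the heights and nothing in $B$ raises $p^i a$ into a higher power of $pB$ than intended. Starting from a pole, one must check that the abstractly reconstructed module is isomorphic to $B$ via an isomorphism carrying the chosen generator to $a$; here the key point is that a pole is, up to isomorphism, determined by the module $A$ (equivalently by $n$) together with the height data, which is essentially the content of \cite[Section~18 and Theorem~25]{kap}.

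\textbf{Main obstacle.} I expect the delicate step to be the well-definedness in the pole-to-sequence direction: showing that strict monotonicity of the height sequence is \emph{equivalent} to indecomposability of $(A \subset B)$ in $\mathcal S(\Lambda)$. One direction (a non-strict increase, say $h(p^{i+1}a) \le h(p^i a)$ — which combined with the general inequality $h(p^{i+1}a) \ge h(p^i a) + 1$ can only fail by the sequence jumping to $\infty$, i.e. $p^{i+1}a = 0$ while $p^i a \ne 0$ with $h(p^i a)$ not forced — actually one should phrase it via the existence of an element of the same height as $p^i a$ annihilated appropriately) lets one peel off a summand of the form $E_\beta$ or a smaller pole; the converse requires showing that if the sequence is strictly increasing then every idempotent endomorphism is trivial. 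This is a standard but somewhat technical argument with height functions in modules over a discrete valuation ring, and it is the heart of Kaplansky's original Theorem~25, so in the write-up I would either reproduce it carefully or cite \cite[Theorem~25]{kap} and limit myself to translating the statement into the present terminology.
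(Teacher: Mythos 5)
Your overall strategy---two explicit maps checked to be mutually inverse---is the standard one, and your sequence-to-pole construction is essentially the paper's construction of $P((m_i))$: one cyclic summand $\Lambda/(p^{m_{i_j}+1})$ for each gap of the sequence. The paper itself simply cites Kaplansky's Theorem~25 for this proposition, so deferring the uniqueness argument to that reference, as you offer to do at the end, would be acceptable.

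There is, however, a genuine error in where you locate the difficulty. You claim that strict increase of $(h(p^ia))_i$ ``is exactly the condition that distinguishes an indecomposable cyclic embedding from a decomposable one'' and make its verification your main obstacle. In fact the height sequence of \emph{any} nonzero element of \emph{any} $\Lambda$-module is automatically strictly increasing until it reaches $\infty$: if $a\in p^mB\setminus p^{m+1}B$ then $pa\in p^{m+1}B$, so $h(p^{i+1}a)\ge h(p^ia)+1$ always, and the sequence is finite and non-empty simply because $a\ne 0$ and $p^na=0$. (The paper builds this monotonicity into the very definition of a height sequence.) So well-definedness of the pole-to-sequence map is a non-issue, and indecomposability plays no role there; your parenthetical attempt to describe how strictness could fail goes in circles precisely because it cannot fail. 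Where indecomposability \emph{is} actually used is in pinning down the ambient module: a general cyclic embedding splits as $P((m_i))\oplus E_\beta$, and being a pole means the empty summand vanishes, i.e.\ $B$ has exactly one indecomposable summand $N_{(m_{i_j}+1)}$ per gap of the height sequence --- this is the content of Lemma~\ref{lemma-gap} (Kaplansky's Lemma~22). The genuinely hard steps are injectivity (two poles whose generators have the same height sequence are isomorphic in $\mathcal S(\Lambda)$) and indecomposability of the constructed $P((m_i))$; your plan only gestures at these via a citation while concentrating its effort on a step that requires no proof.
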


\begin{ex}
Following \cite{kap}, 
we construct a~cyclic embedding $(A\subset B)$ 
for a~given strictly increasing
sequence $(m_i)_{0\leq i\leq n}$ in $\mathbb N_0$. 
Let $i_1>i_2>\cdots>i_s$ be  such that 
$(m_i)$  has gaps exactly after the entries 
$m_{i_1}>m_{i_2}>\cdots>m_{i_s}$.
For $1\leq j\leq s$ put $\beta_j=m_{i_j}+1$
and $\ell_j=m_{i_j}-i_j$, then $\beta$ and $\ell$ are strictly decreasing
sequences of positive and nonnegative integers, respectively.  
Let $$B=N_\beta=\bigoplus_{i=1}^s \Lambda/(p^{\beta_i})$$
be generated by elements $b_{\beta_j}$ of order $p^{\beta_j}$.
Let $a=\sum_{j=1}^s p^{\ell_j}\cdot b_{\beta_j}$ and put $A=(a)$. 
This yields a cyclic embedding $P((m_i))=(A\subset B)$.
 
\end{ex}

\begin{ex}
  The height sequence $(1,3,4)$ has gaps after $1$ and $4$, and
  hence gives rise to the embedding
  $P((1,3,4))=((p^2b_5+pb_2)\subset N_{(5,2)})$.
  $$
  \raisebox{.5cm}{$P((1,3,4)):$} \quad
\begin{picture}(8,15)
  \multiput(0,0)(0,3)5{\smbox}
  \multiput(3,6)(0,3)2{\smbox}
  \multiput(1.5,7.5)(3,0)2\sbullet
  \put(1.5,7.5){\line(1,0)3}
\end{picture}
$$
In the picture, the columns correspond to the indecomposable direct 
summands of $B$; the $i$-th box from the top in a column of length $r$
represents the element $p^{i-1}b_r$.  The columns are aligned vertically
to make the submodule generator(s) homogeneous, if possible.
\end{ex}

\begin{lem}[{see \cite[Lemma~22]{kap}}]
  \label{lemma-gap}
  Suppose the height sequence $(m_i)$ of some element $a$ in $B$ has a gap after $m_\ell$.
  Then $N_{(m_\ell+1)}$ occurs as a direct summand of $B$.
\end{lem}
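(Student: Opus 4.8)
The plan is to use the explicit decomposition $B=\bigoplus_{i=1}^n\Lambda/(p^{\beta_i})$ with $\beta_1\geq\cdots\geq\beta_n$, and to extract from the hypothesis on the height sequence of $a$ the value $m_\ell+1$ as one of the $\beta_i$. Write $a=\sum_{i=1}^n a_i$ with $a_i\in\Lambda/(p^{\beta_i})$, and set $c=p^\ell a$, so that $h_B(c)=m_\ell$ while $h_B(pc)=m_{\ell+1}\geq m_\ell+2$; thus in $B$ the element $c$ has height exactly $m_\ell$ but $pc$ jumps by at least two. Decompose $c=\sum c_i$ componentwise. First I would observe that $h_B(c)=\min_i h(c_i)$, so some component, say $c_j$, realizes the minimum $h(c_j)=m_\ell$; in $\Lambda/(p^{\beta_j})$ this means $c_j$ is (up to a unit) $p^{m_\ell}b_{\beta_j}$, hence $m_\ell\leq\beta_j-1$, i.e. $\beta_j\geq m_\ell+1$.

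The key step is to show that in fact $\beta_j=m_\ell+1$ for at least one such index $j$. Suppose to the contrary that every component $c_i$ of minimal height $m_\ell$ lies in a summand with $\beta_i\geq m_\ell+2$. For each such $i$ we have $h(pc_i)=m_\ell+1$ in $\Lambda/(p^{\beta_i})$, and for the remaining components $h(pc_i)\geq h(c_i)+1\geq m_\ell+2$ (using that $h(c_i)\geq m_\ell$, with strict inequality exactly for those $i$ not of minimal height, and that multiplication by $p$ raises height by exactly one as long as the element does not become zero — and $pc_i=0$ forces $h(pc_i)=\infty\geq m_\ell+2$ as well). Consequently $h_B(pc)=\min_i h(pc_i)=m_\ell+1$, contradicting $h_B(pc)=m_{\ell+1}\geq m_\ell+2$. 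Therefore some minimal-height component sits in a summand of length exactly $m_\ell+1$, i.e. $N_{(m_\ell+1)}=\Lambda/(p^{m_\ell+1})$ is a direct summand of $B$.

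The main obstacle, and the only place requiring care, is the interaction between ``taking the minimum of heights over components'' and ``multiplying by $p$'': one must be careful that a component of non-minimal height might, after multiplication by $p$, vanish or still have height $\geq m_\ell+2$, so that it cannot pull the minimum down to $m_\ell+1$; and that a minimal-height component in a summand of length $\geq m_\ell+2$ necessarily contributes height exactly $m_\ell+1$ after multiplication by $p$. Both facts follow from the elementary structure of $\Lambda/(p^r)$: an element of height $h<r$ has the form $up^h$ with $u$ a unit, so multiplying by $p$ gives height $h+1$ (if $h+1<r$) or height $\infty$ (if $h+1=r$), in either case at least $h+1$. Assembling these observations gives the claim with no further computation; I would present the argument in the componentwise form above, as it makes the gap hypothesis do exactly the work needed.
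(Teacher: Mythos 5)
Your argument is correct and complete. Note that the paper itself does not prove this lemma --- it simply defers to Kaplansky's Lemma~22 --- so your componentwise proof supplies what the paper omits, and it is in substance the classical argument. The two key points are exactly the ones you isolate and both are handled properly: (i) the height of $c=p^\ell a$ in $B=\bigoplus_i\Lambda/(p^{\beta_i})$ is the minimum of the heights of its components, and (ii) in a cyclic summand $\Lambda/(p^{\beta_i})$ a nonzero element of height $h$ is a unit times $p^h$, so multiplication by $p$ yields height exactly $h+1$ when $h+1<\beta_i$ and height $\infty$ when $h+1=\beta_i$. From these, if no minimal-height component lived in a summand of length exactly $m_\ell+1$, every component of $pc$ would have height either exactly $m_\ell+1$ (the minimal ones, sitting in summands of length at least $m_\ell+2$) or at least $m_\ell+2$ (the rest, including those killed by $p$), forcing $h(pc)=m_\ell+1$ and contradicting the gap $m_{\ell+1}\geq m_\ell+2$; the edge cases ($m_{\ell+1}=\infty$, vanishing components) are covered by your height-$\infty$ convention. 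No gaps.
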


\begin{proof}[Proof of the lemma]
 See \cite[Lemma~22 and page 29]{kap}.
\end{proof}

The following result is a slight generalization of Proposition \ref{prop-poles}. We will use  it later.

\begin{prop}\label{prop-cyclics}
  Every cyclic embedding in $\mathcal S(\Lambda)$ has the form
  $$P((m_i))\oplus E_\beta$$
  for a~finite, possibly empty, strictly increasing 
  sequence $(m_i)$ and a possibly empty partition $\beta$.
  More precisely, there is a one-to-one correspondence
  between the set of cyclic
  embeddings up to isomorphy and the set of pairs consisting of a~finite,
  possibly empty, strictly increasing
  sequence and a partition.
\end{prop}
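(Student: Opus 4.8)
The plan is to deduce the statement from Kaplansky's classification of poles in Proposition~\ref{prop-poles} together with the Krull-Remak-Schmidt property of $\mathcal S(\Lambda)$. First I would take a cyclic embedding $(A\subset B)$ and decompose it into indecomposable objects,
\[
(A\subset B)\;\cong\;\bigoplus_{j=1}^t\,(A_j\subset B_j),
\]
which is possible since $\mathcal S(\Lambda)$ is Krull-Remak-Schmidt. Passing to the submodule is an additive operation, so $A\cong\bigoplus_j A_j$. As $A$ is cyclic, it is either indecomposable or zero; in both cases at most one of the $A_j$ can be nonzero, and a nonzero $A_j$ is a direct summand of the indecomposable module $A$, hence equals $A$. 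So after renumbering we may assume $A_j=0$ for $j\ge 2$, and $A_1$ is either $0$ (precisely when $A=0$) or the indecomposable module $A$ itself.

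Next I would identify the summands. For $j\ge 2$ the object $(0\subset B_j)$ is an indecomposable empty embedding, so $B_j=N_{\gamma_j}$ is indecomposable and $(A_j\subset B_j)=(0\subset N_{\gamma_j})$; collecting these yields $E_\beta$, where $\beta$ is the partition with parts $\gamma_2,\dots,\gamma_t$. If $A=0$, then $(A_1\subset B_1)$ is empty as well and may be absorbed into $E_\beta$; writing $P((m_i))=(0\subset 0)$ for the empty sequence $(m_i)$ we get $(A\subset B)\cong P((m_i))\oplus E_\beta$. If $A\ne 0$, then $(A_1\subset B_1)=(A\subset B_1)$ is an indecomposable object of $\mathcal S(\Lambda)$ whose submodule is the indecomposable module $A$, hence a pole, and Proposition~\ref{prop-poles} gives $(A_1\subset B_1)\cong P((m_i))$ for a unique finite non-empty strictly increasing sequence $(m_i)$ in $\mathbb N_0$. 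Either way $(A\subset B)\cong P((m_i))\oplus E_\beta$, which proves the first assertion.

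For the refined one-to-one correspondence I would argue as follows. The assignment $(m_i,\beta)\mapsto P((m_i))\oplus E_\beta$ does land among cyclic embeddings: the submodule of $P((m_i))$ is indecomposable (or zero, if $(m_i)$ is empty) and that of $E_\beta$ is zero, so their sum has cyclic submodule. Surjectivity is exactly the previous paragraph. For injectivity, note that $E_\beta$ is the direct sum of the indecomposable empty embeddings $(0\subset N_{\beta_j})$, whereas a pole $P((m_i))$ with $(m_i)$ non-empty is indecomposable with \emph{nonzero} submodule and is therefore not isomorphic to any empty embedding. Hence the Krull-Remak-Schmidt theorem lets one read off from $P((m_i))\oplus E_\beta$ both the isomorphism type of its unique non-empty indecomposable summand (if any) and the lengths of its empty indecomposable summands; the former determines $(m_i)$ by Proposition~\ref{prop-poles}, the latter determines the partition $\beta$. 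Thus the correspondence is bijective.

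I do not expect a real obstacle: the proof is essentially bookkeeping on top of Proposition~\ref{prop-poles} and the Krull-Remak-Schmidt property of $\mathcal S(\Lambda)$. The only points that need a moment's care are the additivity of ``passing to the submodule'' (immediate from the definition of morphisms in $\mathcal S(\Lambda)$), the remark that a pole has nonzero submodule and so is never an empty embedding, and fixing the convention $P(())=(0\subset 0)$ so that the empty sequence accounts for the purely empty embeddings $E_\beta$.
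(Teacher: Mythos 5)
Your argument is correct and follows essentially the same route as the paper: decompose the cyclic embedding into indecomposables using the Krull--Remak--Schmidt property of $\mathcal S(\Lambda)$, observe that at most one summand can have a nonzero submodule (which is then a pole classified by Proposition~\ref{prop-poles}), and collect the remaining empty summands into $E_\beta$. The only difference is that you spell out the injectivity of the correspondence a bit more explicitly than the paper does, which is harmless.
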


\begin{proof}
 Let $(A\subset B)$ be a cyclic embedding in $\mathcal S(\Lambda)$.  
    The category $\mathcal S(\Lambda)$ has the Krull-Remak-Schmidt property
    so $(A\subset B)$ is isomorphic to a direct sum of indecomposable
    cyclic embeddings.
    As $\Lambda$ is a local ring, and as the submodule
    $A$ is 
    indecomposable or zero, at most
    one of the summands has a nonzero submodule, hence is a pole,
    and hence is given uniquely, up to isomorphy,
      by a finite non-empty strictly increasing sequence in $\mathbb N_0$
      (Proposition~\ref{prop-poles}).
    (The case where the submodule $A$ is zero is included
      since the embedding
       given by the empty height sequence
    denotes the zero object in $\mathcal S(\Lambda)$.)
    The sum of the remaining indecomposable summands has the form $(0\subset B')$
    where  $B'$ is a $\Lambda$-module such that $(0\subset B')\cong E_\beta$
      for some partition $\beta$.
\end{proof}

  \begin{rem}
    Each indecomposable cyclic embedding is either a pole of the form
    $P((m_i))$ for $(m_i)$ a finite non-empty strictly increasing
    sequence in $\mathbb N_0$ or else an empty embedding
    of the form $E_{(n)}$ for $n$ a natural number.
  \end{rem}

\bigskip

\subsection{Partitions, tableaux and partial maps} 

\label{sec-tableaux}
We introduce partial maps on Littlewood-Richardson tableaux and
review cyclic embeddings.

\begin{defin}
Let $(\alpha,\beta,\gamma)$ be a~partition triple. An {\it LR-tableau} of shape $\beta\setminus\gamma$ and content $\alpha$ is a~Young diagram of shape $\beta$
(the {\it outer shape} of the tableau) in which 
the region $\beta\setminus\gamma$ is filled with $(\alpha')_1$ entries
1, $(\alpha')_2$ entries 2, etc., where $\alpha'$ denotes the transpose 
of $\alpha$, such that three conditions are satisfied:
\begin{itemize}
 \item in each row, the entries are weakly increasing;
 \item in each column, the entries
are strictly increasing;
\item the lattice permutation property holds, that is,
on the right hand side of each column there occur at least as many entries
$e$ as there are entries $e+1$ ($e=1,2,\ldots$).
\end{itemize}
\end{defin}

\medskip
An embedding $(A\subset B)$ with corresponding short exact
sequence $0\to A\to B\to C\to 0$ yields three partitions
$\alpha,\beta,\gamma$ which describe the isomorphism types of 
$A,B,C$, respectively.
The Green-Klein Theorem \cite{klein1}  states that a partition triple
$(\alpha,\beta,\gamma)$ can be realized in this way by a short exact 
sequence if and only if there exists an LR-tableau
of shape $\beta\setminus\gamma$ and content $\alpha$.
The tableau of an embedding can be obtained in the following way.

\medskip

Suppose that the submodule $A$ has Loewy
length $r$.  Then the sequence of epimorphisms
$$B=B/p^rA\twoheadrightarrow B/p^{r-1}A\twoheadrightarrow \cdots\twoheadrightarrow B/pA
\twoheadrightarrow B/A$$
induces a~sequence of inclusions of partitions
$$\beta=\gamma^{(r)}\supset\gamma^{(r-1)}\supset\cdots
  \supset\gamma^{(1)}\supset\gamma^{(0)}=\gamma,$$
where  $N_{\gamma^{(e)}}\simeq B/p^eA$.
The partitions define the corresponding LR-tableau
$\Gamma=[\gamma^{(0)}, \gamma^{(1)},\ldots,\gamma^{(r)}]$ as follows.
The Young diagram of shape $\beta=\gamma^{(r)}$ is filled in such a way
that the region given by $\gamma=\gamma^{(0)}$ remains empty and 
every box in the skew diagram
$\gamma^{(e)}\setminus\gamma^{(e-1)}$ carries the entry $\singlebox e$.
The {\it shape} of the tableau is the skew diagram $\beta\setminus\gamma$,
the {\it content} is the partition $\alpha$.

\medskip
The {\it union} of two tableaux is taken row-wise, so if
${\rm E}=[\varepsilon^{(i)}]_{0\leq i\leq s}$ and ${\rm Z}=[\zeta^{(i)}]_{0\leq i\leq t}$ 
are tableaux then
the partitions $\gamma^{(i)}$ for $\Gamma={\rm E}\cup{\rm Z}$ are given by taking
the union (of ordered multi-sets):  
$\gamma^{(i)}=\varepsilon^{(i)}\cup\zeta^{(i)}$ 
where $\varepsilon^{(i)}=\varepsilon^{(s)}$
for $i\geq s$ and $\zeta^{(i)}=\zeta^{(t)}$ for $i\geq t$.

\begin{rem}
It is easy to observe that if embeddings $(A\subset B)$, $(C\subset D)$
have tableaux $\Gamma_1$, $\Gamma_2$, respectively, then the direct sum
$(A\subset B)\oplus (C\subset D)$ has tableau 
$\Gamma=\Gamma_1\cup \Gamma_2$. 
\end{rem}


\begin{defin}
{\it Column tableaux} (which we call {\it columns} if the context is clear) 
are tableaux which have only one column, and where the entries
in this column are subsequent natural numbers (not necessarily starting at 1).
Each column is one of the following: 
For $1\leq e\leq f$, we denote by $C(e,f)_n$ 
the 1-column tableau of height $n$ with entries $e,\ldots,f$.
Formally, $C(e,f)_n=[\gamma^{(0)},\ldots,\gamma^{(f)}]$ where
$\gamma^{(0)}=\cdots=\gamma^{(e-1)}=(n-f+e-1), \gamma^{(e)}=(n-f+e),\ldots,\gamma^{(f)}=(n)$.  
By $C(1,0)_n$ we denote the empty column of length $n$.
Note that a column tableau $C(e,f)_n$ satisfies
the lattice permutation property, and hence is an LR-tableau,
  if and only if $e=1$. 
\end{defin}

\begin{ex}
In a union of column tableaux, the vertical columns
need no longer form  column tableaux.
  $$
  \begin{picture}(3,9)
    \multiput(0,0)(0,3)3{\smbox}
    \put(0,6){\numbox1}
    \put(0,3){\numbox2}
    \put(0,0){\numbox3}
  \end{picture} \quad
  \raisebox{.35cm}{$\cup$} \quad
  \begin{picture}(3,9)
    \multiput(0,3)(0,3)2{\smbox}
    \put(0,3){\numbox1}
  \end{picture}
  \quad\raisebox{.35cm}{$=$} \quad
  \begin{picture}(6,9)
    \multiput(0,0)(0,3)3{\smbox}
    \multiput(3,3)(0,3)2{\smbox}
    \put(3,6){\numbox1}
    \put(3,3){\numbox2}
    \put(0,3){\numbox1}
    \put(0,0){\numbox3}
  \end{picture}
  $$
\end{ex}

\begin{defin}
  A {\it partial map} $g$ on an LR-tableau $\Gamma$ assigns to each box
  $\singlebox e$ with $e>1$ a box $\singlebox d$ with entry $d=e-1$ such that
  \begin{enumerate}
  \item $g$ is one-to-one,
  \item for each box $b$, the row of $g(b)$ is above the row of $b$. 
  \end{enumerate}
    An {\it orbit} of a partial map $g$ is a sequence 
    $\singlebox e, g(\singlebox e), g^2(\singlebox e),\ldots
    , g^{e-1}(\singlebox e)$ given by a box $\singlebox e$ 
      which is not in the image of $g$.
      Thus, the orbits of $g$ are in one-to-one correspondence with
      the boxes $\singlebox e$ not in the image of $g$ (the correspondence
      is given by taking the first box in the orbit),
      and in one-to-one correspondence with the boxes that have entry 1
      (where the correspondence is given by taking the last box).
\end{defin}

\begin{rem}
  Given an LR-tableau $\Gamma$, the existence of at least one partial map
  follows from the lattice permutation property.
\end{rem}

Given a partial map $g$ on an LR-tableau $\Gamma$, a {\it jump} in row $r$ 
is a box $b$ in this row with the property that either $b$ 
has entry $\singlebox 1$ or that $g(b)$ occurs in row $s$ with $s<r-1$. 
We say a partial map $g$ on $\Gamma$ has the {\it empty box property (EBP)}
if for each row $r$, there are at least as many columns in 
$\Gamma$ of exactly $r-1$ empty boxes as there are jumps in row $r$.

\begin{ex}
Consider the following LR-tableau
$$
 \raisebox{.5cm}{$\Gamma:$} \quad
  \begin{picture}(12,12)
    \multiput(0,0)(0,3)4{\smbox}
    \multiput(3,3)(0,3)3{\smbox}
    \multiput(6,6)(0,3)2{\smbox}
    \put(9,9){\smbox}
    \put(9,9){\numbox1}
    \put(6,9){\numbox1}
    \put(6,6){\numbox2}
    \put(0,3){\numbox1}
    \put(3,3){\numbox2}
    \put(0,0){\numbox3}
  \end{picture}
$$  
To define a~partial map, we have to assign to each box 
$\singlebox 3$ a corresponding box $\singlebox 2$ and  to each box $\singlebox 2$ 
a corresponding box $\singlebox 1$. 
Note that we can do this in four different ways. 
To specify the maps, we distinguish boxes 
of the same entry by indicating the row on the right;
for the two boxes in the first row, we use the letters
 L and R.
Consider the partial map $g$ defined as follows:
$$g:\boxwithrow34\mapsto\boxwithrow23,\; \boxwithrow23\mapsto \boxwithrow1L,\;
\boxwithrow22\mapsto \boxwithrow1R$$
Note that the partial map $g$ has three orbits
(one consists only of the box $\boxwithrow 13$).
It satisfies (EBP) since there is a column of 2 empty boxes
corresponding to the jump $\boxwithrow23\mapsto \boxwithrow 11$.
  However, the partial map
$$g': \boxwithrow34\mapsto \boxwithrow22,\; \boxwithrow23\mapsto \boxwithrow1L,\;
\boxwithrow22\mapsto \boxwithrow1R$$
does not satisfy (EBP), because there is no column of 3 empty boxes
corresponding to the jump at $\boxwithrow 34\mapsto \boxwithrow22$.

\end{ex}

  \medskip
We collect some properties of tableaux for cyclic embeddings.

\begin{prop}
  \label{prop-tableau-of-cyclic}
  Suppose the embedding $(A\subset B)$ is cyclic with $A$ a module
  of Loewy length $r$.
  \begin{enumerate}
  \item In the tableau, each entry $1, \ldots, r$
    occurs exactly once.
  \item The height sequence $(m_i)$
    of the submodule generator $a$ 
    determines the rows in $\Gamma$ in which
    the entries occur, and conversely.  
    More precisely, the entry $e$ occurs in row $m_{e-1}+1$.
  \item The LR-tableau $\Gamma$ of a cyclic embedding 
    is a union of columns with disjoint entries.
  \item There is a unique partial map on $\Gamma$; it satisfies the (EBP).
  \end{enumerate}
\end{prop}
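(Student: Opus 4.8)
The plan is to prove the four assertions in sequence, leveraging the explicit description of cyclic embeddings from Proposition~\ref{prop-cyclics}, namely that $(A\subset B)\cong P((m_i))\oplus E_\beta$ with $A$ indecomposable (here of Loewy length $r$), so that $P((m_i))$ has a height sequence $(m_i)_{0\le i\le r-1}$ of length $r$. Since an empty embedding contributes no nonzero entries to the tableau and does not affect the partial map on the nonempty entries, I would reduce at once to the case $(A\subset B)=P((m_i))$, that is, to a pole.

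\textbf{Parts (1) and (2).} For a pole with submodule generator $a$ of order $p^r$, the chain $B=B/p^rA\twoheadrightarrow\cdots\twoheadrightarrow B/A$ used to build $\Gamma$ has exactly $r$ steps, and each quotient $B/p^{e-1}A\twoheadrightarrow B/p^eA$ identifies the cyclic submodule $p^{e-1}A/p^eA\cong k$, so exactly one box is added at stage $e$; hence each entry $1,\dots,r$ occurs exactly once, which is~(1). For~(2), I would compute in which row that single box $\singlebox e$ lies. Passing from $B/p^eA$ to $B/p^{e-1}A$ amounts to ``uncollapsing'' the element $p^{e-1}\bar a$; the row index of the new box is governed by the height of $p^{e-1}a$ in $B$, which by definition of the height sequence is $m_{e-1}$. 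More carefully: the partition $\gamma^{(e-1)}$ is obtained from $\gamma^{(e)}$ by removing one box, and standard facts about the Loewy structure of $B/p^{e-1}A$ versus $B/p^eA$ (the difference being a single generator of height $m_{e-1}$) show that this box sits in row $m_{e-1}+1$. I would make this precise by picking the explicit presentation $B=N_\beta$, $a=\sum_j p^{\ell_j}b_{\beta_j}$ from the Example, and tracking the columns directly; the map $(m_i)\leftrightarrow(\text{rows})$ is then visibly bijective, giving the converse.

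\textbf{Part (3).} Since the entries $1,\dots,r$ each occur once and, by~(2), entry $e$ lies in row $m_{e-1}+1$ with $m_0<m_1<\cdots<m_{r-1}$ strictly increasing, the boxes $\singlebox 1,\dots,\singlebox r$ occupy $r$ distinct rows in strictly increasing order. Using the gap structure of $(m_i)$ and the construction in the Example, $\Gamma$ is literally the union of the columns $C(i_j{+}1,\dots)$ — one column per maximal gap-free run of the height sequence — each such run being a block of consecutive entries occupying consecutive rows. Thus $\Gamma=\bigcup_t C(e_t,f_t)_{n_t}$ where the intervals $[e_t,f_t]$ of entries are pairwise disjoint (they partition $\{1,\dots,r\}$), which is exactly the claim; I would also note each non-first column $C(e_t,f_t)$ with $e_t>1$ is not itself an LR-tableau, consistent with the earlier Example.

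\textbf{Part (4).} Uniqueness of the partial map is forced because each entry $e$ with $e>1$ occurs exactly once and each entry $e-1$ occurs exactly once, so $g(\singlebox e)=\singlebox{e-1}$ is the only possibility; condition~(2) of the partial-map definition holds because row of $\singlebox{e-1}$ is $m_{e-2}+1<m_{e-1}+1=$ row of $\singlebox e$. For the (EBP), I would identify the jumps: a box $\singlebox e$ is a jump exactly when $e=1$, or when the row of $g(\singlebox e)=\singlebox{e-1}$ is strictly less than (row of $\singlebox e$)$-1$, i.e. $m_{e-2}+1<(m_{e-1}+1)-1$, which says $(m_i)$ has a gap after $m_{e-2}$; together with the $e=1$ case this says the jumps in row $r'$ are in bijection with the gaps of $(m_i)$ (and the initial entry) producing a column of height exactly $r'-1$ of empty boxes. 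But Lemma~\ref{lemma-gap} (combined with the explicit $\ell_j,\beta_j$ from the Example) says precisely that each gap after $m_{i_j}$ contributes a summand $N_{m_{i_j}+1}$ of $B$ whose generator has height $0$, i.e. a column of $\gamma$-empty-boxes of the right length; and the jump from entry $1$ corresponds to the column containing entry $1$, whose empty part has the matching length. Matching these two bijections column-by-column gives (EBP), with equality rather than just inequality.

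I expect the main obstacle to be the bookkeeping in part~(2): cleanly proving that removing the single box at stage $e$ lands in row $m_{e-1}+1$ requires relating the partition of $B/p^{e-1}A$ to that of $B/p^eA$ via the height of $p^{e-1}a$. The cleanest route is probably to bypass the abstract quotient argument and instead verify~(2)–(4) directly on the canonical model $P((m_i))$ from the Example, where the columns, their lengths $n_t$, the empty-box counts, and the generator heights are all given by the explicit formulas $\beta_j=m_{i_j}+1$, $\ell_j=m_{i_j}-i_j$; everything then reduces to elementary arithmetic with the gap positions $i_1>\cdots>i_s$.
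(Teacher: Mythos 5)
Your proposal is correct and follows essentially the same route as the paper: one box added per step of the chain $B/p^eA\twoheadrightarrow B/p^{e-1}A$ for (1), locating that box via the height of $p^{e-1}a$ (the paper does this by computing lengths of $B/(p^eA+p^mB)$) for (2), the gap/column correspondence from the explicit construction of $P((m_i))$ for (3), and uniqueness of $g$ plus the column structure for (4). Your part (4) merely spells out in more detail the jump--gap--column matching that the paper compresses into ``since $\Gamma$ is a union of columns, the (EBP) is satisfied.''
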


\begin{proof}
  1. For the first statement note that for each $0<e\leq r$,
  the module $B/p^{e-1}A$ is a factor module of $B/p^eA$ of colength one.

  2. For any embedding, the number of boxes in the first $m$ rows of $\gamma^{(e)}$
  is the length of $(B/p^eA)/((p^mB+p^eA)/p^eA)=B/(p^eA+p^mB)$.
  Here, the modules $B/(p^{e-1}A+p^mB)$ and $B/(p^eA+p^mB)$ have the same length
  if $m\leq m_{e-1}$ and different lengths otherwise since $p^{e-1}a\in p^{m_{e-1}}B\setminus p^{m_{e-1}+1}B$.

  3. By construction of $P((m_i))$, each gap in the height sequence gives rise to a
  new summand of $B$ and hence to a new column in the tableau.
  It follows from 2.\ that $\Gamma$ is a union of columns.
  
  4. There is a unique partial map on $\Gamma$ given by assigning to a box $\singlebox e$
  where $e>1$ the unique box with entry $e-1$.  Since $\Gamma$ is a union of columns,
  the (EBP) is satisfied.
\end{proof}

\begin{cor}\label{cor-tableaux}%
\begin{enumerate}
\item There is a one-to-one correspondence between cyclic embeddings
  up to isomorphy and LR-tableaux in which each entry occurs exactly once.
\item Under this correspondence, a pole corresponds to a tableau 
  in which the number of columns equals the number of gaps in the 
  height sequence.
  An empty embedding $E_\beta$
    corresponds to the empty tableaux on the Young diagram
    for $\beta$.
\end{enumerate}
\end{cor}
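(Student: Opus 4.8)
The plan is to read the bijection off from Proposition~\ref{prop-cyclics} and Proposition~\ref{prop-tableau-of-cyclic}, checking well-definedness, injectivity and surjectivity in turn, and then to extract the second assertion from the same analysis. For well-definedness one notes that the chain $\gamma^{(0)}\subset\cdots\subset\gamma^{(r)}$ attached to an embedding depends only on its isomorphism type, and that by Proposition~\ref{prop-tableau-of-cyclic}(1) the resulting tableau has each entry exactly once. For injectivity I would start from a cyclic embedding written, via Proposition~\ref{prop-cyclics}, as $P((m_i))\oplus E_\beta$, and recover $((m_i),\beta)$ from its tableau $\Gamma$: the sequence $(m_i)$ is read off by Proposition~\ref{prop-tableau-of-cyclic}(2), since entry $e$ occupies row $m_{e-1}+1$; the outer shape of $\Gamma$ is the partition of $B$; the partition of the module of $P((m_i))$ is determined by $(m_i)$ through the construction recalled after Proposition~\ref{prop-poles}; and $\beta$ is then the complement, as a multiset of parts, of that partition inside the outer shape. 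By Proposition~\ref{prop-cyclics} this pins down the embedding.

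The heart of the matter is surjectivity, and the key step is the claim that if $\Gamma$ is an LR-tableau with entries $1,\ldots,r$ each occurring once, and entry $e$ sits in row $\rho_e$, then $\rho_1<\cdots<\rho_r$. To prove $\rho_e<\rho_{e+1}$, I would locate entry $e+1$ in a box $(i,j)$ and invoke the lattice permutation property: among the columns weakly to the right of column $j$ there occur at least as many entries $e$ as entries $e+1$, hence at least one, so (as $e$ occurs once) entry $e$ lies in a box $(i',j')$ with $j'\ge j$. If $j'=j$, strict increase down column $j$ forces $i'<i$. If $j'>j$, then the row containing $e$ has length at least $j'>j$, so $(i',j)$ lies in the outer shape; if $(i',j)$ is filled, weak increase along its row makes its entry at most $e$, and comparing it in column $j$ with the entry $e+1$ of $(i,j)$ again gives $i'<i$ (and $i'\ne i$, else that box would carry two entries); if $(i',j)$ is empty it lies in the inner region, hence so do all boxes of column $j$ in rows $\le i'$, whereas $(i,j)$ does not, so once more $i'<i$. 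In each case $\rho_e=i'<i=\rho_{e+1}$.

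Granting the claim, the rest should go as follows. In any column of $\Gamma$ the filled boxes form a block of consecutive rows carrying strictly increasing entries, and a skipped value would, by the claim, demand a row strictly between two consecutive ones; hence each column of $\Gamma$ is a column tableau in the sense of Section~\ref{sec-tableaux} --- empty or carrying a block of consecutive entries --- and $\Gamma$ is the union of its columns. Put $(m_i)=(\rho_1-1,\ldots,\rho_r-1)$, a finite strictly increasing sequence. Grouping the nonempty columns of $\Gamma$ according to the gaps of $(m_i)$ reproduces block by block the columns of the tableau of $P((m_i))$ (described by Proposition~\ref{prop-tableau-of-cyclic}(3) together with the construction of $P((m_i))$), whereas the empty columns assemble into the tableau of an empty embedding $E_\beta$; since a direct sum of embeddings has as tableau the union of the tableaux of the summands, $\Gamma$ is the tableau of the cyclic embedding $P((m_i))\oplus E_\beta$. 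Statement~2 then follows from the same picture: the tableau of $P((m_i))$ has as many columns as the module of $P((m_i))$ has indecomposable summands, namely one per gap of $(m_i)$ by the construction after Proposition~\ref{prop-poles}, and any further column of $\Gamma$ arises from a nonzero summand $E_\beta$; finally $E_\beta=(0\subset N_\beta)$ has submodule of Loewy length $0$, so its tableau is the Young diagram of $\beta$ with no entries.

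I expect the only genuine obstacle to be the claim $\rho_1<\cdots<\rho_r$: it is the one place where the lattice permutation property, and not merely the row and column monotonicity of an LR-tableau, is used, and the argument must be set up so that the subcases $j'=j$ and $j'>j$, and within the latter the filled and the empty alternative for the box $(i',j)$, are all covered. Once this is in place, the passage from $\Gamma$ to a union of column tableaux, and thence via Proposition~\ref{prop-cyclics} to a cyclic embedding, is routine bookkeeping about the union operation.
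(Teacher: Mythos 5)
Your key claim that $\rho_1<\cdots<\rho_r$ is correct, and your case analysis ($j'=j$; $j'>j$ with $(i',j)$ filled or empty) proves it properly from the lattice permutation property. This is also a genuinely different route from the paper's: for surjectivity the paper simply invokes the Green--Klein theorem to realize $\Gamma$ by \emph{some} embedding $(A\subset B)$, observes that $A$ is cyclic because $\dim A/pA=\dim B/pA-\dim B/A$ equals the number of boxes with entry $1$, and then reads off the height sequence by Proposition~\ref{prop-tableau-of-cyclic}(2) and concludes by Proposition~\ref{prop-cyclics}. Your approach replaces the appeal to Green--Klein by an explicit construction, which is more self-contained but forces you to verify that the constructed embedding really has tableau $\Gamma$ --- and that is where there is a genuine gap.

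The problematic step is the assertion that ``grouping the nonempty columns of $\Gamma$ according to the gaps of $(m_i)$ reproduces block by block the columns of the tableau of $P((m_i))$, whereas the empty columns assemble into the tableau of an empty embedding.'' The union of tableaux is taken row-wise and re-sorts the parts of each $\gamma^{(i)}$, so the geometric columns of a union are in general \emph{not} the columns of the summands. Concretely, let $\Gamma$ have outer shape $\beta=(2,1)$, inner shape $\gamma=(1)$, with entry $1$ in row $1$ of the column of length $1$ and entry $2$ in row $2$ of the column of length $2$ (whose top box is empty). This is an LR-tableau with each entry occurring once, $\rho=(1,2)$, hence $(m_i)=(0,1)$ with no interior gap. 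The associated embedding is $P((0,1))\oplus E_{(1)}$ and its tableau is indeed $\Gamma$; but the tableau of $P((0,1))$ is the single column $C(1,2)_2$, whereas the two nonempty columns of $\Gamma$ are $C(2,2)_2$ and $C(1,1)_1$, and $\Gamma$ has no empty column at all even though $E_{(1)}\neq 0$. So entries belonging to one gap-free block of the height sequence can be spread over several columns of $\Gamma$, and the empty summand need not appear as empty columns; your column-matching does not go through as stated. The conclusion survives, but it needs a different justification, for example: an LR-tableau in which each entry occurs exactly once is uniquely determined by its outer shape $\beta$ together with the rows $\rho_1<\cdots<\rho_r$, because $\gamma^{(e-1)}$ is obtained from $\gamma^{(e)}$ by replacing one part $\rho_e$ by $\rho_e-1$; one then checks that $P((m_i))\oplus E_{\beta^\circ}$, with $\beta^\circ$ the multiset complement in $\beta$ of the ambient partition of the pole, has the same outer shape and, by Proposition~\ref{prop-tableau-of-cyclic}(2), the same rows $\rho_e$. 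Alternatively, adopt the paper's shortcut through the Green--Klein theorem and avoid the reconstruction altogether.
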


\begin{proof}
  1. By the Green-Klein theorem, any LR-tableau $\Gamma$ can be realized 
  as the tableau of an embedding $(A\subset B)$.  
  If $\Gamma$ has exactly one box $\singlebox 1$, then $A$ is 
an indecomposable $\Lambda$-module
  since $\dim A/pA=\dim B/pA-\dim B/A=\#\{\text{boxes $\singlebox 1$}\}$.
  The height sequence of the generator of $A$ can be read off
  using Proposition~\ref{prop-tableau-of-cyclic}, 2. The uniqueness of the
  isomorphism type follows from Proposition~\ref{prop-cyclics}.

  2. Regarding poles, we have seen in the proof of Proposition~\ref{prop-poles}
  that the summands of the ambient space of a pole
  correspond to the gaps in the height sequence
  of the submodule generator.  The result follows from 
  Proposition~\ref{prop-cyclics}.
\end{proof}

 \section{Direct sums of cyclic embeddings}

\label{sec-sums-cyclic}

We can now generalize Kaplansky's results and
give a~combinatorial description for 
finite direct sums of cyclic embeddings, up to isomorphy.

\begin{defin}\begin{itemize}  
  \item
    Two partial maps $g$, $g'$ on an LR-tableau $\Gamma$ are {\it equivalent}
    if $g'=\pi^{-1}\circ g\circ \pi$ for some permutation $\pi$ 
    on the set of non-empty  boxes  in $\Gamma$
    which preserves the entry and the row.
  \item Two pairs $(\Gamma,g)$, $(\Delta, h)$, each consisting of an
    LR-tableau and a partial map on the tableau, are {\it equivalent}
    if $\Gamma=\Delta$ and if the partial maps $g,h$ are equivalent.
    In this case we write $(\Gamma,g)\sim(\Delta,h)$.
  \end{itemize}
\end{defin}

 \begin{ex}
  Consider the LR-tableau from the example in Section \ref{sec-tableaux}.
$$
 \raisebox{.5cm}{$\Gamma:$} \quad
  \begin{picture}(12,12)
    \multiput(0,0)(0,3)4{\smbox}
    \multiput(3,3)(0,3)3{\smbox}
    \multiput(6,6)(0,3)2{\smbox}
    \put(9,9){\smbox}
    \put(9,9){\numbox1}
    \put(6,9){\numbox1}
    \put(6,6){\numbox2}
    \put(0,3){\numbox1}
    \put(3,3){\numbox2}
    \put(0,0){\numbox3}
  \end{picture}
$$  
The maps $g$ and $g'$ are not equivalent.  
But $g$ is equivalent to the partial map
$$h: \singlebox3\mapsto\boxwithrow23, \;\boxwithrow23\mapsto \boxwithrow1R,\;
\boxwithrow22\mapsto \boxwithrow1L.$$
(There is a fourth partial map on $\Gamma$, it is equivalent to $g'$.)
 \end{ex}

\medskip
Clearly, given two equivalent partial maps on an LR-tableau,
then one satisfies (EBP) if and only if the other does.

\begin{thm}\label{theorem-combinatorial-classification}
  There is a one-to-one correspondence
    $$\big\{\bigoplus\text{cyclics}\big\} \big /_{\cong}
    \quad \stackrel{1-1}\longleftrightarrow \quad
    \big\{(\Gamma,g):\text{$g$ partial map on $\Gamma$ with (EBP)}\big\}
    \big /_{\sim}$$
  between the isomorphism types
  of direct sums of cyclic embeddings, and the equivalence classes
  of pairs $(\Gamma,g)$ where $\Gamma$ is an LR-tableau
  and $g$ a partial map on $\Gamma$ which satisfies (EBP).
\end{thm}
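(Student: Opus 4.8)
The plan is to build a map $\Phi$ from isomorphism types of direct sums of cyclic embeddings to $\sim$-classes of pairs $(\Gamma,g)$ with (EBP), a candidate inverse $\Psi$, and to check that both are well defined and mutually inverse; essentially all the combinatorial content will sit in one lemma. For $\Phi$: by Proposition~\ref{prop-cyclics} and the Remark on unions of tableaux, a direct sum of cyclic embeddings is, up to isomorphism, $X=\bigl(\bigoplus_{O}P_O\bigr)\oplus E_\delta$ for a multiset of poles $P_O$ and a partition $\delta$, with tableau $\Gamma=\bigl(\bigcup_O\Gamma(P_O)\bigr)\cup E_\delta$. I attach to $\Gamma$ the partial map $g$ that agrees, on each ``sheet'' $\Gamma(P_O)$ of the union, with the unique partial map on $\Gamma(P_O)$ given by Proposition~\ref{prop-tableau-of-cyclic}(4), and is undefined on the empty boxes. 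Since the non-empty boxes of $\Gamma$ are partitioned by the sheets and the union is row-wise, injectivity and the row condition for $g$ are inherited sheet-wise, so $g$ is a partial map. The only arbitrariness is the identification of the boxes of $\Gamma$ with the disjoint union of the boxes of the $\Gamma(P_O)$ and $E_\delta$; two such identifications differ by a permutation of the non-empty boxes of $\Gamma$ that preserves rows and entries, and conjugation by it carries one resulting $g$ to the other. Hence $(\Gamma,g)$ is well defined up to $\sim$ and depends only on the isomorphism type of $X$.

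In the other direction, given $(\Gamma,g)$ with (EBP) I read off the orbit data: each orbit $O$ of $g$ is a chain $\singlebox e,g(\singlebox e),\dots,g^{e-1}(\singlebox e)$ carrying the entries $e,e-1,\dots,1$ in strictly increasing rows $\rho_1<\dots<\rho_e$, and I put $P_O=P((\rho_1-1,\dots,\rho_e-1))$ (Proposition~\ref{prop-poles}) and $\Psi(\Gamma,g)=\bigl(\bigoplus_O P_O\bigr)\oplus E_\delta$, where $\delta$ is the multiset difference of the parts of the outer shape of $\Gamma$ and the union of the parts of the outer shapes of the $P_O$. For $\Psi$ to make sense and for $\Phi,\Psi$ to be mutually inverse I would first prove the key lemma: \emph{for a partial map $g$ on an LR-tableau $\Gamma$, $g$ satisfies} (EBP) \emph{if and only if} $\Gamma=\bigl(\bigcup_O\Gamma(P_O)\bigr)\cup E_{\delta}$ \emph{for some partition} $\delta$ (necessarily the one above). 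Its proof rests on two observations. First, the tableau of a pole is a union of column tableaux with consecutive, pairwise disjoint entry sets (Proposition~\ref{prop-tableau-of-cyclic}(3)), whose unique partial map cuts each orbit exactly into these column tableaux (a box $\singlebox e$ that is not a jump has $g(\singlebox e)$ in the row directly above it, by consecutivity of entries inside a column tableau), and the top filled box of a column tableau of height $n$ with $k$ filled boxes sits in row $n-k+1$; consequently, after cutting each orbit at its jumps,
$$\#\{\text{jumps of }g\text{ in row }r\}=\#\{\text{column tableaux among the }\Gamma(P_O)\text{ with exactly }r-1\text{ empty boxes}\}.$$
Second, the empty region of a union of column tableaux is the partition whose parts are the heights of the constituent empty column tableaux, so for $\rho\ge1$ the number of columns of $\Gamma$ with exactly $\rho$ empty boxes equals the number of parts of its empty region that equal $\rho$. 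Combining these (and noting (EBP) is automatic in row $1$, where $\#\{\text{columns of }\Gamma\text{ with }0\text{ empty boxes}\}=\#\{\text{filled boxes in row }1\}$), (EBP) says exactly that the multiset of parts of the empty region of $\Gamma$ contains that of $\bigcup_O\Gamma(P_O)$; and since a tableau is determined by its empty region together with the multiset of its filled boxes labelled by row and entry (the level-$d$ row counts being ``empty boxes in the row'' plus ``filled boxes of entry $\le d$ in the row''), this is in turn equivalent to $\Gamma=\bigl(\bigcup_O\Gamma(P_O)\bigr)\cup E_\delta$.

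Granting the lemma, the rest is bookkeeping. That $\Phi(X)$ satisfies (EBP) is the ``if'' direction; that the $\delta$ occurring in $\Psi$ has nonnegative parts is the ``only if'' direction, so $\Psi$ is well defined. For $\Psi\Phi=\mathrm{id}$: the orbits of the $g$ built by $\Phi$ are precisely the chains of the $\Gamma(P_O)$, so $\Psi$ recovers the multiset $\{P_O\}$ (using that the multiset of row-and-entry sequences of the orbits is a $\sim$-invariant) and then recovers $\delta$ by the multiset difference; this gives back $X$ up to isomorphism. For $\Phi\Psi\sim\mathrm{id}$: by the lemma the tableau of $\Psi(\Gamma,g)$ is $\Gamma$ on the nose, and the partial map that $\Phi$ attaches restricts on each sheet $\Gamma(P_O)$ to the unique partial map there, which coincides with $g$ on the corresponding orbit, so $\Phi\Psi(\Gamma,g)$ and $(\Gamma,g)$ differ only by a row- and entry-preserving relabelling of boxes, i.e.\ are $\sim$-equivalent. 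Since $\Gamma=\bigl(\bigcup_O\Gamma(P_O)\bigr)\cup E_\delta$ is compatible with the equivalence replacing ``$g$'' by an equivalent partial map, this is all that is needed.

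I expect the main obstacle to be the key lemma, and within it the displayed identity matching jumps against column tableaux with a prescribed number of empty boxes. This is exactly where Kaplansky's description of poles is used: it amounts to tracing, through Lemma~\ref{lemma-gap} and Proposition~\ref{prop-tableau-of-cyclic}, the fact that each gap of a height sequence produces simultaneously a direct summand of the ambient module (hence a column of the tableau with a prescribed number of empty boxes) and a jump of the unique partial map in the matching row; I would isolate this first as a statement about a single pole and then assemble the direct sum from it. A more routine, but still necessary, point is the reconstruction of a tableau from its empty region and its labelled filled boxes, which is what legitimises reading the equality $\Gamma=\bigl(\bigcup_O\Gamma(P_O)\bigr)\cup E_\delta$ off the equality of empty regions.
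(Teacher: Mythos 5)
Your proposal is correct and follows essentially the same route as the paper: decompose via Proposition~\ref{prop-cyclics} into poles plus an empty embedding, match orbits of the partial map with poles, and use (EBP) to account for the empty columns needed by the jumps. Your ``key lemma'' (that (EBP) holds iff $\Gamma=(\bigcup_O\Gamma(P_O))\cup E_\delta$) is a cleanly isolated and somewhat more detailed version of the counting that the paper's proof carries out implicitly, but it is not a different argument.
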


\begin{proof}
    We describe this correspondence explicitely.

    \smallskip
    Consider a direct sum of cyclic embeddings as the sum
    of an empty embedding and a direct sum of poles
    (Proposition~\ref{prop-cyclics}).
    The height sequence of each pole gives rise to a partial map
    on the tableau of the pole which satisfies (EBP),
    this map has precisely one orbit; it records the isomorphism
    type of the pole (Proposition~\ref{prop-tableau-of-cyclic}).
The tableau of the direct sum is given by taking the row-wise union
  of the tableaux of the summands; it admits a partial map which is 
  (boxwise) defined by the partial maps for the summands; this map satisfies
  (EBP) because the restricted maps do. 
  Given two isomorphic direct sum decompositions, the two unordered lists of
  height sequences of the poles involved are equal, hence the 
  two associated partial maps differ by conjugation by a permutation of the boxes
  which preserves rows and entries.

\smallskip
Conversely, suppose $g$ is a partial map on $\Gamma$ with (EBP).
For each orbit $\mathcal O$ of $g$, the boxes in $\mathcal O$ together
with the empty parts of columns which correspond to the jumps 
(recall the definition of the (EBP)), 
constitute the tableau of a pole.  
The empty parts of columns in $\Gamma$
not used by the (EBP) define the tableau of an empty embedding. 
Since all boxes are accounted for, $\Gamma$ is the union of all the 
tableaux.  Hence the sum of the poles and the empty embedding 
has $\Gamma$ as its tableau.
If $g'$ is an equivalent partial map on $\Gamma$, then the boxes in the 
orbits may differ, but not the rows in which they occur.
Thus the corresponding
unordered list of height sequences of poles is equal, 
and so is the partition
which records those empty parts of columns which are not used up
by the jumps in the height sequences.  
Hence the associated embeddings are isomorphic.

\smallskip
The two assignments are inverse to each other since this is true for 
poles and empty embeddings.
\end{proof}

There is a different way to read off from a given LR-tableau
if it is the tableau of a direct sum of cyclic embeddings.

\begin{cor}
\label{cor-union-columns}
An LR-tableau $\Gamma$ is the tableau of 
a direct sum of cyclic embeddings if and only if $\Gamma$ is
a union of columns.
\end{cor}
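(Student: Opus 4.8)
\textbf{Proof proposal for Corollary~\ref{cor-union-columns}.}
The plan is to derive the statement from Theorem~\ref{theorem-combinatorial-classification} together with Proposition~\ref{prop-tableau-of-cyclic}, part~3, which already gives one direction. First I would note the easy implication: if $\Gamma$ is the tableau of a direct sum of cyclic embeddings, then by the Remark after the definition of union, $\Gamma$ is the row-wise union of the tableaux of the indecomposable summands, each of which is either a pole or an empty embedding. By Proposition~\ref{prop-tableau-of-cyclic}(3) the tableau of a pole is a union of columns (with disjoint entries), and the tableau of an empty embedding is an empty Young diagram, which is trivially a union of empty columns. Since a union of unions of columns is a union of columns, $\Gamma$ is a union of columns.

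For the converse, suppose $\Gamma$ is a union of columns, say $\Gamma = C_1 \cup C_2 \cup \cdots \cup C_m$ where each $C_j = C(e_j, f_j)_{n_j}$ is a column tableau. The key step is to exhibit a partial map $g$ on $\Gamma$ satisfying (EBP), so that Theorem~\ref{theorem-combinatorial-classification} applies. The natural choice is the map that, within each column $C_j$ of the decomposition, sends the box with entry $e$ (for $e_j < e \le f_j$) to the box directly above it with entry $e-1$ in the same column $C_j$. I would check that this is a well-defined partial map: it is one-to-one because distinct columns are disjoint as box-sets and within a column the assignment is injective; and for each box $b$, the box $g(b)$ lies in the row immediately above, so condition (2) holds. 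Next I would verify (EBP): in this map, a jump in row $r$ is a box $b$ in row $r$ with entry $\singlebox{1}$, equivalently a box that is the top box of some column $C_j$ with $e_j = 1$ — but also columns with $e_j > 1$ contribute jumps at their bottom-most relevant box; here I must be careful that a column $C(e,f)_n$ with $e > 1$ has $n - f + e - 1 = n - f + e - 1$ empty boxes above its entry-$e$ box, and since $g$ sends entry-$e$ boxes upward by exactly one row within the column, the only jumps are the boxes with entry $\singlebox{1}$, each sitting at the top of the filled part of its column $C_j$, with exactly $n_j - f_j$ empty boxes above it; counting rows, such a jump occurs in row $n_j - f_j + 1$, and the column $C_j$ itself supplies exactly $n_j - f_j$ empty boxes, i.e.\ $r-1$ empty boxes for $r = n_j - f_j + 1$. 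Matching jumps to the empty parts of the very columns that produce them yields (EBP).

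I would then conclude: by Theorem~\ref{theorem-combinatorial-classification}, the pair $(\Gamma, g)$ corresponds to a direct sum of cyclic embeddings whose tableau is $\Gamma$, which is exactly what is claimed. The main obstacle I anticipate is the bookkeeping in the (EBP) verification: one must set up the correspondence between jumps and empty column-parts carefully, making sure that each empty part of a column in the decomposition is used by at most one jump and that the row indices line up, rather than merely counting globally. A clean way to organize this is to observe that $g$ restricted to the union of the filled part of $C_j$ with the empty part of $C_j$ is precisely the canonical partial map on a pole tableau as in the construction of $P((m_i))$ in the Example following Proposition~\ref{prop-poles}, for which (EBP) was already established in Proposition~\ref{prop-tableau-of-cyclic}(4); then (EBP) for $g$ on all of $\Gamma$ follows because it holds column-by-column and the columns partition the boxes of $\Gamma$ (after grouping empty parts appropriately). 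This reduces the corollary to facts already proved.
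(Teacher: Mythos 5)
Your forward direction is fine and your overall strategy for the converse --- construct a partial map with (EBP) on the given union of columns and invoke Theorem~\ref{theorem-combinatorial-classification} --- is exactly the paper's strategy (the paper is even terser, saying only that ``the lattice permutation property allows for the construction of a partial map with (EBP)''). The problem is that your explicit construction does not produce a partial map. You define $g$ within each column $C_j=C(e_j,f_j)_{n_j}$ only on the boxes with entries $e$ in the range $e_j<e\le f_j$. But a partial map must assign to \emph{every} box with entry $e>1$ a box with entry $e-1$; when $e_j>1$, the top filled box of $C_j$ carries the entry $e_j>1$ and receives no assignment under your rule, since $C_j$ contains no box with entry $e_j-1$. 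Such columns genuinely occur: the tableau of the pole $P((1,3,4))$ decomposes as $C(1,1)_2\cup C(2,3)_5$, and the entry-$2$ box of the second column must be sent to the entry-$1$ box of the first. Your (EBP) discussion reflects this unresolved issue: you first note that columns with $e_j>1$ ``contribute jumps at their bottom-most relevant box'' and then conclude that ``the only jumps are the boxes with entry $1$''; these cannot both hold. Likewise your closing reduction to Proposition~\ref{prop-tableau-of-cyclic}(4) fails for these columns, because a single column $C(e,f)_n$ with $e>1$ is not an LR-tableau (it violates the lattice permutation property) and so is not the tableau of a pole.

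What is missing is precisely the nontrivial step: for each column of the decomposition with $e_j>1$ one must choose, injectively across all such columns and compatibly with the row condition, a target box with entry $e_j-1$ lying in a strictly higher row --- this cross-column chaining is where the lattice permutation property of $\Gamma$ (as opposed to properties of the individual columns) is actually used, which is what the paper's one-line argument is pointing at. Once such a chaining is fixed, your (EBP) bookkeeping does go through in corrected form: the top filled box of $C_j$ sits in row $r=n_j-f_j+e_j$, the column $C_j$ itself has exactly $r-1$ empty boxes, so whenever that box is a jump its own column certifies (EBP), and distinct jumps use distinct columns. So the corollary is salvageable along your lines, but the existence and injectivity of the cross-column assignment is the heart of the converse and cannot be omitted.
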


\begin{proof}
In Proposition~\ref{prop-tableau-of-cyclic} 
we have seen that the tableau of a cyclic embedding  is 
a union of columns, and this property is preserved under taking
direct sums.

\smallskip
Conversely, if the tableau is a union of columns, then the 
lattice permutation property allows for the construction of a 
partial map with (EBP).  The result follows from 
Theorem~\ref{theorem-combinatorial-classification}.
\end{proof}

We conclude this section with examples and remarks.

\begin{ex}
In general, the embedding corresponding to a given LR-tableau
is not determined uniquely, up to isomorphy, by the tableau alone.
In each case, the following embeddings will have the same given LR-tableau.

\medskip
(1) Two nonisomorphic sums of poles, given by nonequivalent partial maps.
  $$
  \raisebox{.5cm}{$\Gamma:$} \quad
  \begin{picture}(8,12)
    \multiput(0,0)(0,3)4{\smbox}
    \multiput(3,6)(0,3)2{\smbox}
    \put(6,9){\numbox1}
    \put(3,6){\numbox1}
    \put(0,0){\numbox2}
  \end{picture}
  \qquad
  \raisebox{.5cm}{$P((1,3))\oplus P((0)):$} \quad
  \begin{picture}(12,12)
    \multiput(0,0)(0,3)4{\smbox}
    \multiput(3,3)(0,3)2{\smbox}
    \multiput(1.5,4.5)(3,0)2\sbullet
    \put(1.5,4.5){\line(1,0)3}
    \put(9,3){\smbox}
    \put(10.5,4.5)\sbullet
  \end{picture}
  \qquad
  \raisebox{.5cm}{$P((0,3))\oplus P((1)):$} \quad
  \begin{picture}(12,12)
    \multiput(0,0)(0,3)4{\smbox}
    \multiput(3,3)(0,3)1{\smbox}
    \multiput(1.5,4.5)(3,0)2\sbullet
    \put(1.5,4.5){\line(1,0)3}
    \multiput(9,3)(0,3)2{\smbox}
    \put(10.5,4.5)\sbullet
  \end{picture}
  $$

(2) A sum of two poles vs.\ a sum of two poles and an empty embedding.
  $$
  \raisebox{.5cm}{$\Gamma:$} \quad
  \begin{picture}(8,9)
    \multiput(0,0)(0,3)3{\smbox}
    \multiput(3,6)(0,3)1{\smbox}
    \put(6,6){\numbox1}
    \put(3,3){\numbox1}
    \put(0,0){\numbox2}
  \end{picture}
  \qquad
  \raisebox{.5cm}{$P((0,2))\oplus P((1)):$} \quad
  \begin{picture}(12,9)
    \multiput(0,0)(0,3)3{\smbox}
    \multiput(3,3)(0,3)1{\smbox}
    \multiput(1.5,4.5)(3,0)2\sbullet
    \put(1.5,4.5){\line(1,0)3}
    \multiput(9,3)(0,3)2{\smbox}
    \put(10.5,4.5)\sbullet
  \end{picture}
  \qquad
  \raisebox{.35cm}{\parbox{.23\textwidth}{%
      $P((1,2))\oplus P((0))$\\ $\phantom x\quad\oplus E_{(2)}:$}} 
  \quad
  \begin{picture}(15,9)
    \multiput(0,0)(0,3)3{\smbox}
    \multiput(6,3)(0,3)1{\smbox}
    \multiput(1.5,4.5)(3,0)1\sbullet
     \multiput(12,3)(0,3)2{\smbox}
    \put(7.5,4.5)\sbullet
  \end{picture}
  $$

(3) The sum of poles is determined uniquely by the partial map; 
but there is also an embedding which is not a sum of poles.
  $$
  \raisebox{.5cm}{$\Gamma:$} \quad
  \begin{picture}(8,12)
    \multiput(0,0)(0,3)4{\smbox}
    \multiput(3,3)(0,3)3{\smbox}
    \multiput(6,6)(0,3)2{\smbox}
    \put(6,9){\numbox1}
    \put(6,6){\numbox2}
    \put(3,3){\numbox1}
    \put(0,0){\numbox3}
  \end{picture}
  \qquad
  \raisebox{.5cm}{$P((0,1,3))\oplus P((2)):$} \quad
  \begin{picture}(12,12)
    \multiput(0,0)(0,3)4{\smbox}
    \multiput(3,3)(0,3)2{\smbox}
    \multiput(1.5,7.5)(3,0)2\sbullet
    \put(1.5,7.5){\line(1,0)3}
    \multiput(9,3)(0,3)3{\smbox}
    \put(10.5,4.5)\sbullet
  \end{picture}
  \qquad
  \raisebox{.5cm}{$E\oplus E_{(3)}:$} \quad
  \begin{picture}(12,12)
    \multiput(0,0)(0,3)4{\smbox}
    \multiput(3,3)(0,3)2{\smbox}
    \multiput(1.5,7.5)(3,0)2\sbullet
    \put(4.5,4.5)\sbullet
    \put(1.5,7.5){\line(1,0)3}
    \multiput(9,3)(0,3)3{\smbox}
  \end{picture}
  $$

(4) Two partial maps, up to equivalence, only one has the (EBP).
  $$
  \raisebox{.5cm}{$\Gamma:$} \quad
  \begin{picture}(12,12)
    \multiput(0,0)(0,3)4{\smbox}
    \multiput(3,3)(0,3)3{\smbox}
    \multiput(6,6)(0,3)2{\smbox}
    \put(9,9){\smbox}
    \put(9,9){\numbox1}
    \put(6,9){\numbox1}
    \put(6,6){\numbox2}
    \put(0,3){\numbox1}
    \put(3,3){\numbox2}
    \put(0,0){\numbox3}
  \end{picture}
  \qquad
  \raisebox{.5cm}{\parbox{.23\textwidth}{%
      $P((0,1,3))$\\ $\phantom x\quad \oplus P((2))$\\
      $\phantom x\quad \oplus P((0,1)):$}}
  \begin{picture}(18,12)
    \multiput(0,0)(0,3)4{\smbox}
    \multiput(3,3)(0,3)2{\smbox}
    \multiput(1.5,7.5)(3,0)2\sbullet
    \put(1.5,7.5){\line(1,0)3}
    \multiput(9,3)(0,3)3{\smbox}
    \put(10.5,4.5)\sbullet
    \multiput(15,3)(0,3)2{\smbox}
    \put(16.5,7.5)\sbullet
  \end{picture}
  \qquad
  \raisebox{.5cm}{$E\oplus P((0,2)):$} \quad
  \begin{picture}(15,12)
    \multiput(0,0)(0,3)4{\smbox}
    \multiput(3,3)(0,3)2{\smbox}
    \multiput(1.5,7.5)(3,0)2\sbullet
    \put(4.5,4.5)\sbullet
    \put(1.5,7.5){\line(1,0)3}
    \multiput(9,3)(0,3)3{\smbox}
    \put(12,6){\smbox}
    \multiput(10.5,7.5)(3,0)2\sbullet
    \put(10.5,7.5){\line(1,0)3}
  \end{picture}
  $$

(5) Two partial maps with (EBP), up to equivalence.
  $$
  \raisebox{.5cm}{$\Gamma:$} \quad
  \begin{picture}(12,12)
    \multiput(0,0)(0,3)4{\smbox}
    \multiput(3,3)(0,3)3{\smbox}
    \multiput(6,3)(0,3)3{\smbox}
    \put(9,9){\smbox}
    \put(9,9){\numbox1}
    \put(6,6){\numbox1}
    \put(3,3){\numbox2}
    \put(6,3){\numbox2}
    \put(0,0){\numbox3}
  \end{picture}
  \qquad
 \raisebox{.5cm}{\parbox{.23\textwidth}{%
      $P((0,2,3))$\\ $\phantom x\quad \oplus P((1,2))$\\
      $\phantom x\quad \oplus E_{(3)}:\;$}}
  \begin{picture}(18,12)
    \multiput(0,0)(0,3)4{\smbox}
    \multiput(3,6)(0,3)1{\smbox}
    \multiput(1.5,7.5)(3,0)2\sbullet
    \put(1.5,7.5){\line(1,0)3}
    \multiput(9,3)(0,3)3{\smbox}
    \put(10.5,7.5)\sbullet
    \multiput(15,3)(0,3)3{\smbox}
  \end{picture}
  \qquad
 \raisebox{.5cm}{\parbox{.23\textwidth}{%
      $P((1,2,3))$\\ $\phantom x\quad \oplus P((0,2))$\\
      $\phantom x\quad \oplus E_{(3)}:\;$}}
  \begin{picture}(12,12)
    \multiput(0,0)(0,3)4{\smbox}
    \put(1.5,7.5){\sbullet}
    \multiput(6,3)(0,3)3{\smbox}
    \put(9,6){\smbox}
    \multiput(7.5,7.5)(3,0)2\sbullet
    \put(7.5,7.5){\line(1,0)3}
    \multiput(15,3)(0,3)3{\smbox}
  \end{picture}
  $$

\end{ex}

\begin{rem}
The proof of Theorem~\ref{theorem-combinatorial-classification}
shows how to read off the direct sum decomposition from a given pair
$(\Gamma,g)$. 
Each orbit of $g$ gives rise to a pole, its height sequence 
is given by Proposition~\ref{prop-tableau-of-cyclic}.
To each jump in the height sequence, there is a corresponding empty part
of a column in $\Gamma$. 
The empty boxes in those columns which do not correspond to any pole
determine the remaining empty embedding.
\end{rem}

\begin{rem}
An algebraic description of finite direct sums of cyclic embeddings
is given in \cite[in particular Lemma~1 and Theorem~2]{hrw}.  
Here, the multiplicity of a pole $P$ as a direct
summand of a given embedding $M$ is the dimension of the $k$-vector
space $F_P(M)$, for a suitable functor $F_P:\mathcal S(\Lambda)\to \mod k$.
\end{rem}

\section{Applications of height sequences}
\label{sec-applications}

We briefly describe four situations in which height sequences
or direct sums of cyclic embeddings play a role.

\subsection{$p^2$-bounded embeddings in infinite length modules}

In this paragraph, $B$ may be any $\Lambda$-module, not necessarily
finitely generated.

\begin{cor}\label{cor-p-squared-bounded-sub}
Suppose that $(A\subset B)$ is an embedding of $\Lambda$-modules where 
$A$ is $p^2$-bounded and where $B$ is either finitely generated
or bounded.  Then the embedding is a direct sum of cyclic embeddings.
\end{cor}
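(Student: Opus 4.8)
\emph{Overall strategy.} The plan is to build a direct-sum decomposition of $(A\subseteq B)$ into cyclic embeddings by peeling off one cyclic summand at a time. First note that a finitely generated module over a discrete valuation ring is bounded, so we may assume $p^{n+1}B=0$ for some $n$; recall also that a bounded $\Lambda$-module is a direct sum of cyclic $\Lambda$-modules. It then suffices to prove: \emph{if $A\ne 0$, there are a decomposition $B=B_0\oplus B'$ and an element $0\ne a\in A$ with $A=(a)\oplus(A\cap B')$ and $(a)\subseteq B_0$}. Indeed, $((a)\subseteq B_0)$ is then a cyclic embedding, and the residual embedding $(A\cap B'\subseteq B')$ is again $p^2$-bounded over a bounded module, on which we iterate. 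When $A$ is infinite this iteration is organized by a Zorn argument on the set of partial decompositions of $(A\subseteq B)$ into cyclic summands (plus a complementary summand); once $A$ is exhausted, the remaining object is an empty embedding $(0\subseteq B'')$ with $B''$ bounded, hence a direct sum of empty cyclic embeddings.

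\emph{Extracting one summand.} Choose $a$ by an Ulm-type condition: if $pA\ne 0$, take $a$ of order $p^2$ with $h_B(pa)$ maximal among the nonzero elements of $pA$; if $pA=0$, take any $0\ne a\in A$. Put $A_0=(a)$. Since $A_0$ is cyclic, $(A_0\subseteq B)$ is a cyclic embedding, and applying Kaplansky's construction recalled in Section~\ref{sec-height-seq} — that is, using Lemma~\ref{lemma-gap} at every gap of the height sequence $H_B(a)$ — we obtain gap generators spanning a direct summand $B_0$ of $B$ with $(A_0\subseteq B_0)$ a pole (Proposition~\ref{prop-cyclics}). It remains to choose the complement $B'$ of $B_0$ so that $A\subseteq A_0\oplus B'$; this forces $A=A_0\oplus(A\cap B')$, since $A_0\subseteq A$. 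When $H_B(a)$ has a single gap this is automatic for \emph{any} complement $B'$: then $B_0=(b)$ is cyclic, with $A_0=B_0[p]$ if $pA=0$ and $A_0=B_0[p^2]$ if $a$ has order $p^2$ and $h_B(pa)=h_B(a)+1$ (so that $B_0\cong\Lambda/(p^{h_B(a)+2})$), and since $p^2A=0$ the $B_0$-component of every element of $A$ already lies in $B_0[p^2]=A_0$.

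\emph{The main obstacle.} The case where $H_B(a)$ has, in addition, a lower gap is the technical heart. Here $B_0$ acquires a second cyclic summand and the inclusion $A_0\subseteq B_0[p^2]$ is proper, so the argument of the previous paragraph does not apply directly. One splits off the summand $N_{(h_B(pa)+1)}$ forced by the top gap, and must then show — using Lemma~\ref{lemma-gap} again together with the maximality built into the choice of $a$, and possibly a more careful choice of the gap generators — that the remaining part of $A$ can be moved out of $B_0$ by an automorphism of $B$ fixing $B_0$; equivalently, that the extraction can be arranged so that $A$ meets each peeled-off summand exactly in $A_0$. Controlling this compatibility while, in the infinite case, carrying out transfinitely many extractions is where the hypothesis that $B$ is bounded is genuinely used: it makes all the heights occurring finite and allows one to induct, e.g., on the number of indecomposable summands of $B$ of maximal Loewy length. (For finitely generated $B$ one can argue more cheaply that $A$ has Loewy length at most $2$, so the LR-tableau $\Gamma$ of $(A\subseteq B)$ has entries in $\{1,2\}$ and is therefore a union of columns; by Corollary~\ref{cor-union-columns} it is realized by \emph{some} direct sum of cyclic embeddings. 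But, as the examples following Theorem~\ref{theorem-combinatorial-classification} show, $\Gamma$ alone does not determine $(A\subseteq B)$, so — equivalently, by Krull--Remak--Schmidt, the statement that every indecomposable embedding with $p^2A=0$ is a pole or an empty embedding — the extraction argument is still needed.)
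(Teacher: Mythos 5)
There is a genuine gap. Your plan correctly reduces the corollary to a single splitting statement: given $0\neq a\in A$ chosen with $h_B(pa)$ maximal, one must produce a decomposition $B=B_0\oplus B'$ with $(a)\subseteq B_0$ and $A\subseteq (a)\oplus B'$. You verify this only when the height sequence $H_B(a)$ has a single gap, and you explicitly defer the remaining case (``one must then show \dots\ that the remaining part of $A$ can be moved out of $B_0$ by an automorphism of $B$ fixing $B_0$''). But that deferred case is not a technicality --- it is the entire content of the corollary, and it does genuinely occur even under your maximality condition (e.g.\ $B=\Lambda/(p^4)\oplus\Lambda/(p)$ and $a=p^2b_4+b_1$ has height sequence $(0,3)$ with two gaps; in larger examples $A$ properly contains $(a)$ and the required adjustment of $A$ modulo $B_0$ is exactly what has to be proved). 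No argument is given for why such an automorphism exists, nor why the maximality of $h_B(pa)$ suffices to control it. Likewise, the Zorn/transfinite step for bounded but infinitely generated $B$ is only asserted: you do not check that a union of a chain of partial decompositions is again one (the complements $B'$ need not be compatible along the chain), nor that the process exhausts $A$.

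For comparison, the paper does not prove the splitting step either; it quotes it from the literature: the finitely generated case is \cite[Corollary~5.4]{ps}, and the bounded case is obtained by viewing a $p^2$-bounded embedding as a module over the triangular matrix ring $R$ and invoking the Auslander/Ringel--Tachikawa-type theorem \cite[Theorem~1]{moore}, which says every $R$-module is a direct sum of finitely generated indecomposables; one then only needs the (known, finite) list of indecomposables, all of which are poles or empty embeddings. Your parenthetical observation that the LR-tableau argument cannot substitute for this --- because the tableau does not determine the embedding up to isomorphism --- is correct and well taken. But as written, your proof establishes the result only when every nonzero $a\in pA$ of maximal height has a height sequence with a single gap; to complete it you must either prove the two-gap extraction lemma (essentially reproving \cite{ps}) or cite these results as the paper does.
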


\begin{proof}
The case where $B$ is finitely generated is covered in \cite[Corollary~5.4]{ps}.
There is exactly one indecomposable embedding where $B$ is not bounded,
namely $(0\subset\Lambda)$.

\smallskip
For the bounded case, note that an embedding $(A\subset B)$ with
$p^2A=0=p^nB$  can be considered a module over the ring
$$R = \begin{pmatrix} \Lambda/(p^2) & \Lambda/(p^2)\\ 0 & \Lambda/(p^n)
\end{pmatrix},$$
so \cite[Theorem~1]{moore} yields the claim.
\end{proof}

\subsection{Direct sums of cyclics in the Green-Klein Theorem}

Let $\alpha$, $\beta$, $\gamma$ be partitions.  
Recall from \cite{klein1} that there exists a short exact sequence 
$$\mathcal E: 0\to N_\alpha\to N_\beta\to N_\gamma\to 0$$
if and only if there exists an LR-tableau
of shape $\beta\setminus\gamma$ and content $\alpha$. 
As a consequence, we obtain

\begin{cor}
Given partitions $\alpha$, $\beta$, $\gamma$, there exists an embedding 
$N_\alpha\to N_\beta$ with cokernel $N_\gamma$ which is a direct sum of cyclic 
embeddings if and only if there is an LR-tableau of shape $\beta\setminus\gamma$
and content $\alpha$ which is a union of columns.
\end{cor}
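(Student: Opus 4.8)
The plan is to read this off directly from Corollary~\ref{cor-union-columns} together with the dictionary, set up in Section~\ref{sec-tableaux}, between an embedding and the shape and content of its LR-tableau. Recall that the LR-tableau $\Gamma$ of an embedding $(A\subset B)$ with short exact sequence $0\to A\to B\to C\to 0$ has outer shape the partition of $B$, the region that remains empty is the partition of $C$, and its content is the partition of $A$; so $\Gamma$ has shape $\beta\setminus\gamma$ and content $\alpha$ precisely when $A\cong N_\alpha$, $B\cong N_\beta$ and $C\cong N_\gamma$. With this observation both implications become immediate.

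For the ``only if'' direction, suppose $(N_\alpha\subset N_\beta)$ is a direct sum of cyclic embeddings with cokernel $N_\gamma$. Its associated LR-tableau $\Gamma$ has shape $\beta\setminus\gamma$ and content $\alpha$ by the remark above, and by Corollary~\ref{cor-union-columns} (equivalently, by Proposition~\ref{prop-tableau-of-cyclic}, 3.\ applied to each summand and the fact that a union of unions of columns is a union of columns) the tableau $\Gamma$ is a union of columns. This furnishes the required LR-tableau. For the ``if'' direction, let $\Gamma$ be an LR-tableau of shape $\beta\setminus\gamma$ and content $\alpha$ which is a union of columns. By Corollary~\ref{cor-union-columns}, $\Gamma$ is the LR-tableau of some direct sum of cyclic embeddings $(A\subset B)$; since the shape and content of $\Gamma$ force $A\cong N_\alpha$, $B\cong N_\beta$ and cokernel $N_\gamma$, this embedding $N_\alpha\to N_\beta$ has all the desired properties.

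The argument involves no genuine obstacle: the only point requiring care is the translation between ``LR-tableau of shape $\beta\setminus\gamma$ and content $\alpha$'' and the isomorphism types $N_\alpha$, $N_\beta$, $N_\gamma$ of the three modules in the embedding, and this is just an unwinding of the definitions in Section~\ref{sec-tableaux}. Conceptually the statement is the ``union of columns'' refinement of the Green--Klein Theorem \cite{klein1}, with Corollary~\ref{cor-union-columns} supplying exactly the extra combinatorial information that singles out the direct sums of cyclic embeddings.
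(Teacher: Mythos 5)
Your proof is correct and follows essentially the same route as the paper: the paper's own proof is a one-line appeal to Corollary~\ref{cor-union-columns} combined with the standard dictionary (from Section~\ref{sec-tableaux} and the Green--Klein theorem) between the shape and content of the LR-tableau and the isomorphism types of $A$, $B$, $C$; you simply spell out both directions of that argument explicitly.
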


\begin{proof}
Using Corollary~\ref{cor-union-columns} 
it is possible to decide whether there exists an embedding $N_\alpha\to N_\beta$ with
cokernel $N_\gamma$ which is a direct sum of cyclic embeddings.
\end{proof}

\subsection{Tableaux which are horizontal strips}

The skew diagram  $\beta\setminus\gamma$ 
is said to be a~{\it horizontal strip} if $\beta_i\leq \gamma_i+1$ 
holds for all $i$, and a~{\it vertical strip} if $\beta'\setminus\gamma'$
is a~horizontal strip.
We are particularly interested in the following situation.

\begin{cor}\label{cor-horizontal-strip}
Suppose $\Gamma$ is an LR-tableau of shape 
$(\alpha,\beta,\gamma)$ such that $\beta\setminus\gamma$ is a horizontal
strip.  
Then there exists an embedding $(A\subset B)$ which is 
a direct sum of cyclic embeddings and which has tableau $\Gamma$.
\end{cor}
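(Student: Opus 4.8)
The plan is to invoke Corollary~\ref{cor-union-columns}, which reduces the statement to showing that an LR-tableau $\Gamma$ whose shape $\beta\setminus\gamma$ is a horizontal strip is automatically a union of columns. So the task becomes purely combinatorial: given the horizontal strip condition $\beta_i\leq\gamma_i+1$ for all $i$, together with the three defining properties of an LR-tableau (rows weakly increasing, columns strictly increasing, lattice permutation), I must decompose $\Gamma$ into one-column tableaux with pairwise disjoint entry sets, each column having entries that are consecutive integers starting from its topmost entry.

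The key observation is that a horizontal strip has at most one filled box in each row (this is exactly what $\beta_i-\gamma_i\leq 1$ says). I would first record this and then read off what the LR conditions force. Since each row contains at most one entry, there are no repeated entries within a row, so the ``weakly increasing in rows'' condition is vacuous, and it remains to understand the vertical (column) interactions and the lattice permutation property. Let me list the filled boxes of $\Gamma$ from top to bottom; say row $r_1<r_2<\cdots<r_m$ are the rows containing a box, with entries $e_1,\dots,e_m$ respectively. The plan is to show these entries, read in this order, form a sequence in which the multiset $\{e_1,\dots,e_m\}$ has exactly $(\alpha')_j$ occurrences of $j$, and that one can partition the positions into strings of consecutive values $f,f+1,\dots$ each string occupying a genuine column of $\beta$ (i.e.\ the boxes of the string are vertically stacked in $\beta$).

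Concretely I would argue as follows. Consider the boxes with entry $1$. By the lattice permutation property there is at least one, and since the shape is a horizontal strip each lives in its own row and in its own column of $\beta$. For each entry-$1$ box $b$ in column $c$ of the Young diagram $\beta$, follow column $c$ downward (if that entry-$1$ box is not in the bottom row of its column, the next box below has a strictly larger entry, but weak-increase in its row plus the structure forces it to be exactly $2$ — here I need to check carefully that it is $2$ and not larger, using that $\beta\setminus\gamma$ being a horizontal strip means the region $\beta\setminus\gamma$ intersects column $c$ in at most one box, so below the entry-$1$ box the column must be in $\gamma$, contradiction — hence actually each entry-$1$ box is the \emph{only} filled box in its column, and similarly for every column). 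Wait: that last deduction shows each column of $\beta$ contains at most one filled box, which would make $\Gamma$ trivially a union of one-box columns; but that cannot be right in general. So the real content is that a horizontal strip is simultaneously described by its \emph{transpose} being a vertical strip, and one should instead run the argument on $\beta'\setminus\gamma'$: there, each \emph{column} of $\beta'$ has at most one box, i.e.\ each \emph{row} of $\beta$ meets $\beta\setminus\gamma$ in at most one box (which we already knew), and crucially consecutive boxes of $\Gamma$ reading down a column of $\beta$ then have entries that increase by exactly $1$, because skipping a value would violate the lattice permutation property together with the one-box-per-row constraint. This last point — that entries down any column of $\beta$ increase by exactly one, with no gaps — is the heart of the matter and the step I expect to be the main obstacle; it should follow by a counting argument comparing, for each value $e$, the number of entry-$e$ and entry-$(e+1)$ boxes and their row positions, using the lattice permutation property to block any ``jump.''

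Once that no-gap property is established, the decomposition is immediate: each maximal vertical run of filled boxes in a column of $\beta$ is a column tableau $C(e,f)_n$ in the sense of the definition in Section~\ref{sec-tableaux}, the entry sets of distinct runs are disjoint precisely because within a single column of $\beta$ entries strictly increase and runs in different columns of $\beta$ are separated, and $\Gamma$ is the row-wise union of all these column tableaux. By Corollary~\ref{cor-union-columns} there is then a direct sum of cyclic embeddings $(A\subset B)$ with tableau $\Gamma$, which is the assertion. I would write up the combinatorial lemma (horizontal-strip LR-tableau $\Rightarrow$ union of columns) as the substantive step and then close with the one-line appeal to Corollary~\ref{cor-union-columns}.
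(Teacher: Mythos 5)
Your overall route --- reduce to Corollary~\ref{cor-union-columns} and show that a horizontal-strip LR-tableau is a union of columns --- is viable, and it is genuinely different from the paper's proof, which instead takes an arbitrary partial map $g$ on $\Gamma$ (one exists by the lattice permutation property), observes that the (EBP) holds automatically, and applies Theorem~\ref{theorem-combinatorial-classification} directly. But as written your argument has a real gap, and it comes from a misreading of the conventions. In this paper the parts $\beta_i$ are \emph{column} heights of the Young diagram: each summand $\Lambda/(p^{\beta_i})$ of $N_\beta$ is drawn as a column of $\beta_i$ boxes, and the column tableau $C(e,f)_n$ has outer shape the one-part partition $(n)$. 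So $\beta_i\leq\gamma_i+1$ says that each \emph{column} of $\beta\setminus\gamma$ contains at most one box (a horizontal strip may well have several boxes in one row, e.g.\ $\beta=(2,2)$, $\gamma=(1,1)$); your ``key observation'' that each \emph{row} has at most one box is the transposed (vertical-strip) condition and is not what the hypothesis gives you. The irony is that mid-argument you derived exactly the right statement --- ``each column of $\beta$ contains at most one filled box'' --- and then discarded it as ``cannot be right in general.'' It is right, and it finishes the proof in one line: every column of $\Gamma$ is either an empty column or a one-box column tableau $C(e,e)_n$ whose single entry sits in the bottom box (row $n=\gamma_j+1$), and since the parts of each $\gamma^{(i)}$ are precisely the heights of the columns of $\gamma^{(i)}$, the row-wise union of these column tableaux recovers $\Gamma$. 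No ``entries down a column increase by exactly one'' lemma is needed; for a horizontal strip that statement is vacuous.

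Consequently the step you flag as ``the heart of the matter and the main obstacle'' is, in the correct convention, a non-issue, but in the form you pose it (after transposing) it is left unproved --- you only assert that it ``should follow by a counting argument.'' So the proposal is incomplete as it stands, even though the strategy is sound and the missing lemma is true. For comparison: the paper's proof is equally short but verifies the (EBP) for \emph{every} partial map on $\Gamma$ (each filled box in row $r$ is the unique entry of a column with exactly $r-1$ empty boxes), so it shows that every choice of partial map on a horizontal-strip tableau is realized by a direct sum of cyclic embeddings, which is slightly more information than existence alone.
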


\begin{proof}
Since $\Gamma$ is an LR-tableau, there exists a partial
map $g$ on $\Gamma$.  Since $\beta\setminus\gamma$ is a horizontal strip,
each column in the tableau $\Gamma$ has at most one entry, so the (EBP)
is trivially satisfied.
The embedding corresponding to the partial map $g$ is a direct sum
of cyclic embeddings, and it has the tableau $\Gamma$.
\end{proof}

\begin{rem}  In the situation of Corollary~\ref{cor-horizontal-strip},
finite direct sums of cyclic embeddings turn out to be a useful tool
for the study of the variety $\mathbb V_{\alpha,\gamma}^\beta$
of all embeddings $N_\alpha\to N_\beta$ with cokernel $N_\gamma$;
here, $\Lambda=k[[T]]$  is the power series ring with coefficients
in an algebraically closed field.
Namely, each irreducible component of $\mathbb V_{\alpha,\gamma}^\beta$
has the form $\overline{\mathbb V}_\Gamma$
for some LR-tableau  $\Gamma$  of shape $(\alpha,\beta,\gamma)$.
The boundary relation for tableaux $\Gamma$, $\Delta$ is given by
the condition that
$\overline{\mathbb V}_\Gamma\cap\mathbb V_\Delta\neq\emptyset$.
In  \cite[Theorem 5.1]{ks-box} it is shown that box moves in LR-tableaux
imply the boundary relation by constructing one-parameter
families of embeddings which are all finite direct sums of cyclic
embeddings.
\end{rem}

\subsection{Endo-submodules of $\Lambda$-modules}

\label{sec-endo}

Given a $\Lambda$-module $B$, 
we obtain a precise description for the submodules of 
$B$ when considered as a module over its endomorphism ring.

\medskip
We recall from   \cite[Theorem~24]{kap} that a height sequence $(m_i)$
defines an $\End(B)$ submodule 
consisting of all elements of $B$
with height sequence $\geq(m_i)$ (this is the ``fully transitive'' property).
The partial ordering on height sequences is given by
$(m_i)\geq (q_i)$ if $m_i\geq q_i$ holds for all $i\in\mathbb N$.

\medskip
It is shown in \cite[Theorem~25]{kap} that
every $\End(B)$-submodule of $B$ is cyclic
(as an $\End_\Lambda(B)$-module),
and hence is determined 
uniquely, up to equality, by the height sequence of a generator.

\medskip
Suppose $a\in B$ has height sequence $(m_i)$ with gaps exactly after
$m_{i_1}>m_{i_2}>\cdots> m_{i_s}$.
We can specify the corresponding $\End(B)$-submodule $X$ explicitly.
Namely, with $a=\sum_j p^{\ell_j}b_{\beta_j}$ as above, also the summands $p^{\ell_j}b_{\beta_j}$ are in $X$,
and so are their images under endomorphisms of $B$.
Put $k_j=\beta_j-\ell_j$ for $1\leq j\leq s$, then
$$\End(B)\cdot a = \sum_{j=1}^s(\rad^{\ell_j}B\cap \soc^{k_j}B).$$

\medskip
With this notation,
the endo-submodule $\End(B)\cdot a$ can also be written as an intersection of sums:
$$\End(B)\cdot a=\rad^{\ell_1}B\cap(\soc^{k_1}B+\rad^{\ell_2}B)\cap\cdots\cap
(\soc^{k_{s-1}}B+\rad^{\ell_s}B)\cap \soc^{k_s}B$$
To verify this, use induction and the modular law.

\medskip
The following result leads to
a quick proof for the formula for the number of $\End(B)$-submodules
of a finite length $\Lambda$-module $B$.

\begin{cor}\label{cor-endo-submodules}
Suppose the $\Lambda$-module $B$ is given by a partition $\beta$.
Any two of the following sets are in one-to-one correspondence.
\begin{enumerate}
\item The set of $\End(B)$-submodules of $B$.
\item The set of embeddings of the form $((a)\subset B)$, up to isomorphy.
\item The set of LR-tableaux with outer shape $\beta$ 
  in which each entry occurs exactly once.
\end{enumerate}
As a consequence, 
the $\Lambda$-module $B=N_\beta$ has exactly
$\prod_i(1+\beta_i-\beta_{i+1})$ many endo-submodules.
\end{cor}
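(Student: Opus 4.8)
The plan is to prove the three sets in Corollary~\ref{cor-endo-submodules} are pairwise in bijection, and then derive the counting formula from the equivalence (1)$\Leftrightarrow$(3).

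For the equivalence (1)$\Leftrightarrow$(2), I would use the fully transitive property recalled from \cite[Theorem~24, Theorem~25]{kap}: every $\End(B)$-submodule $X$ of $B$ is cyclic as an $\End_\Lambda(B)$-module, hence of the form $\End(B)\cdot a$, and is uniquely determined up to equality by the height sequence $H_B(a)$ of a generator $a$. On the other hand, an embedding $((a)\subset B)$ is a cyclic embedding, and by Proposition~\ref{prop-cyclics} together with Proposition~\ref{prop-tableau-of-cyclic}(2) its isomorphism type is determined by the pair consisting of the height sequence of $a$ (which is a strictly increasing sequence padded by $\infty$) and the partition $\beta$ of $B$ --- but here $\beta$ is fixed, so the isomorphism type is determined by $H_B(a)$ alone. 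Thus both sets are parametrized by the same data: the set of height sequences realizable as $H_B(a)$ for some $a\in B$. The only point needing a line of care is that ``up to isomorphy'' in (2) and ``up to equality'' in (1) match up correctly: two embeddings $((a)\subset B)$, $((a')\subset B)$ are isomorphic in $\mathcal S(\Lambda)$ exactly when $H_B(a)=H_B(a')$, and the endo-submodules $\End(B)a$, $\End(B)a'$ are equal exactly when $H_B(a)=H_B(a')$ by fully transitivity; so the correspondence $((a)\subset B)\mapsto \End(B)\cdot a$ is well-defined and bijective.

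For the equivalence (2)$\Leftrightarrow$(3), I would invoke Corollary~\ref{cor-tableaux}(1): cyclic embeddings up to isomorphy correspond bijectively to LR-tableaux in which each entry occurs exactly once. Restricting the ambient module to $B=N_\beta$ means restricting to those tableaux with outer shape $\beta$, which is precisely set (3). (One should note $((0)\subset B)$ corresponds to the empty tableau of shape $\beta$, so the zero submodule case is included consistently.)

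For the counting formula, I would count the LR-tableaux with outer shape $\beta$ in which each entry occurs exactly once. By Proposition~\ref{prop-tableau-of-cyclic}(3) and Corollary~\ref{cor-union-columns} such a tableau is a union of columns with disjoint entries; concretely, it is obtained by choosing, for each part $\beta_i$ of $\beta$, either to leave that row empty or to place a filled box in the $i$-th row of some column, subject to the column-tableau and lattice-permutation constraints. The cleanest way to organize the count: an LR-tableau with each entry occurring once and outer shape $\beta$ amounts to choosing a strictly increasing height sequence $(m_0<m_1<\cdots<m_{r-1})$ together with a matching to rows, but the constraint imposed by the columns-being-a-union-of-columns structure reduces this to an independent choice in each ``step'' of $\beta$. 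For each $i$ with $\beta_i>\beta_{i+1}$, there are $\beta_i-\beta_{i+1}$ rows (the rows of length strictly between $\beta_{i+1}$ and $\beta_i$, inclusive of the $\beta_i$-level) in which one may or may not have the bottom of a column carrying entries, and the choices turn out to multiply, giving $\prod_i(1+\beta_i-\beta_{i+1})$ (with the convention $\beta_{i+1}=0$ once we pass the last part). I expect this last combinatorial count to be the main obstacle: one must set up the bijection between such tableaux and tuples of independent choices carefully, checking that the column structure genuinely decouples the rows at different ``levels'' of $\beta$ and that the lattice permutation property imposes no further restriction once each entry occurs only once. I would verify the bijection by describing explicitly, for a tuple of choices, the resulting union of columns, and conversely reading off the choices from the columns of a given tableau; the factor $1+\beta_i-\beta_{i+1}$ then counts the position (including ``none'') of the lowest filled box among the rows of that block.
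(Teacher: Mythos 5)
Your proposal follows essentially the same route as the paper: both parametrize all three sets by height sequences, using Kaplansky's Theorems 24--25 together with Corollary~\ref{cor-tableaux} and Proposition~\ref{prop-tableau-of-cyclic}(2), and both obtain the product formula by counting tableaux column by column. The one step you flag as delicate is handled in the paper by the observation that, since no entry repeats, each entry is the unique (hence rightmost) filled box in its row, so the $i$-th column carries some number $k_i$ of entries at its bottom with $0\le k_i\le\beta_i-\beta_{i+1}$, and these choices are independent --- which is the clean form of the count you sketch.
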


\begin{proof}
In each of the sets, the elements are given by height sequences.
The correspondence follows from Corollary~\ref{cor-tableaux},
and Proposition~\ref{prop-tableau-of-cyclic}, 2., together with the isomorphy
between the lattices of $\End(B)$-submodules of $B$ and of
height sequences of elements in $B$ in \cite[Theorem~25]{kap}.
For the formula we count the third set. 
Note that tableau properties require that each entry be the rightmost
entry in its row since there are no multiple entries.  Hence in the $i$-th column,
there may be between $0$ and $\beta_i-\beta_{i+1}$ many entries.
The expression is the product of the number of choices for the number 
of entries in each column.
\end{proof}


\bigskip
    {\bf Dedication.}
    The authors wish to dedicate this paper to
    Professor Jos\'e Antonio de la Pe\~na
    on the occasion of his 65th birthday.
    Throughout their careers,
    Professor de la Pe\~na has shown an active interest in
    their work, supported them through invitations to important conferences,
    and inspired them through his dedication to representation theory and his
    interest in applications.
    His exemplary paper \cite{bps} has been a model for
    this manuscript since combinatorial invariants control families 
    of modules which turn out to play a meaningful role geometrically.


\bigskip
Address of the authors:

\parbox[t]{5.5cm}{\footnotesize\begin{center}
              Faculty of Mathematics\\
              and Computer Science\\
              Nicolaus Copernicus University\\
              ul.\ Chopina 12/18\\
              87-100 Toru\'n, Poland\end{center}}
\parbox[t]{5.5cm}{\footnotesize\begin{center}
              Department of\\
              Mathematical Sciences\\ 
              Florida Atlantic University\\
              777 Glades Road\\
              Boca Raton, Florida 33431\end{center}}

\smallskip \parbox[t]{5.5cm}{\centerline{\footnotesize\tt justus@mat.umk.pl}}
           \parbox[t]{5.5cm}{\centerline{\footnotesize\tt markus@math.fau.edu}}

\end{document}